                    \def\version{June 18, 2018}                       %
\def\@rmrk#1#2{\refstepcounter
    {#1}\@ifnextchar[{\@yrmrk{#1}{#2}}{\@xrmrk{#1}{#2}}}
\makeatletter\@addtoreset{equation}{section}\makeatother
 \newfont{\bfit}{cmbxti10 scaled 1200}
\renewcommand{\d}{{\rm d}}
 \newcommand{\e}{{\rm e} }
 \newcommand{\eps}{\varepsilon}
 \newcommand{\R}{\mathbb{R}}
 \newcommand{\N}{\mathbb{N}}
 \newcommand{\E}{\mathbb{E}}
 \renewcommand{\P}{\mathbb{P}}
 \def\1{{\mathchoice {1\mskip-4mu\mathrm l} 
{1\mskip-4mu\mathrm l}
{1\mskip-4.5mu\mathrm l} {1\mskip-5mu\mathrm l}}}
 \newcommand{\Mcal}{{\mathcal M}}
\newcommand{\heap}[2]{\genfrac{}{}{0pt}{}{#1}{#2}}
\newcommand{\ssup}[1] {{\scriptscriptstyle{({#1}})}}
\renewcommand{\subsection}{\secdef \subsct\sbsect}
\newcommand{\subsct}[2][default]{\refstepcounter{subsection}
\vspace{0.15cm}
{\flushleft\bf \arabic{section}.\arabic{subsection}~\bf #1  }
\nopagebreak\nopagebreak}
\newcommand{\sbsect}[1]{\vspace{0.1cm}\noindent
{\bf #1}\vspace{0.1cm}}
\newtheorem{theorem}{Theorem}[section]
\newtheorem{lemma}[theorem]{Lemma}
\newtheorem{cor}[theorem]{Corollary}
\newtheoremstyle{thm}{1.5ex}{1.5ex}{\itshape\rmfamily}{}
{\bfseries\rmfamily}{}{2ex}{}
\newtheoremstyle{rem}{1.3ex}{1.3ex}{\rmfamily}{}
{\itshape\rmfamily}{}{1.5ex}{}
\theoremstyle{rem}
\newtheorem{remark}{{\slshape\sffamily Remark}}[]
\def\thebibliography#1{\section*{References}
  \list%
  {\arabic{enumi}.}
    {\settowidth\labelwidth{[#1]}\leftmargin\labelwidth
    \advance\leftmargin\labelsep
    \parsep0pt\itemsep0pt
    \usecounter{enumi}}
    \def\newblock{\hskip .11em plus .33em minus .07em}
    \sloppy                   
    \sfcode`\.=1000\relax}
\begin{document}
\title[Strong coupling limit of the Polaron measure and the Pekar process]
{\large Strong coupling limit of the Polaron measure and the Pekar process}
\author[Chiranjib Mukherjee and S. R. S. Varadhan]{}
\maketitle
\thispagestyle{empty}
\vspace{-0.5cm}

\centerline{\sc  Chiranjib Mukherjee\footnote{University of M\"unster, Einsteinstrasse 62, M\"unster 48149, Germany, {\tt chiranjib.mukherjee@uni-muenster.de}}
and S. R. S. Varadhan \footnote{Courant Institute of Mathematical Sciences, 251 Mercer Street, New York, NY 10012, USA, {\tt varadhan@cims.nyu.edu}}}
\renewcommand{\thefootnote}{}
\footnote{\textit{AMS Subject
Classification:} 60J65, 60J55, 60F10.}
\footnote{\textit{Keywords:} Polaron problem, strong coupling limit, large deviations, Pekar process}

\vspace{-0.5cm}
\centerline{\textit{University of M\"unster and Courant Institute New York}}
\vspace{0.2cm}

\begin{center}
\version
\end{center}

\begin{quote}{\small {\bf Abstract: }
 The  {\it{Polaron measure}} is defined as the transformed path measure
  $$\widehat\P_{\eps,T}= Z_{\eps,T}^{-1}\,\, \exp\bigg\{\frac{1}{2}\int_{-T}^T\int_{-T}^T\frac{\eps\e^{-\eps|t-s|}}{|\omega(t)-\omega(s)|} \,\d s \,\d t\bigg\}\d\P$$
with respect to the law $\P$ of three dimensional Brownian increments on a   finite interval $[-T,T]$, 
and $ Z_{\eps,T}$ is  the partition function with $\eps>0$ being a constant. The logarithmic asymptotic behavior of the partition function $Z_{\eps,T}$ was analyzed in  \cite{DV83} 
showing that
$$
g_0=\lim_{\eps \to 0}\bigg[\lim_{T\to\infty}\frac{\log Z_{\eps,T}}{2T}\bigg]=\sup_{\heap{\psi\in H^1(\R^3)}{\|\psi\|_2=1}} 
\bigg\{\int_{\R^3}\int_{\R^3}\d x\d y\,\frac {\psi^2(x) \psi^2(y)}{|x-y|} -\frac 12\big\|\nabla \psi\big\|_2^2\bigg\}.
$$
In \cite{MV18} we analyzed the actual path measures and showed that
 the limit ${\widehat \P}_{\eps}=\lim_{T\to\infty}\widehat\P_{\eps,T}$  exists and identified this limit  explicitly, and as a corollary, 
 we also deduced the central limit theorem for 
 $(2T)^{-1/2}(\omega(T)-\omega(-T))$
 under $\widehat\P_{\eps,T}$ and obtained an expression for the limiting variance $\sigma^2(\eps)$. 
 In the present article, we investigate the {\it{strong coupling limit}} $\lim_{\eps\to 0} \lim_{T\to\infty} \widehat\P_{\eps,T}=\lim_{\eps\to 0} \widehat \P_\eps$ and show that this limit coincides with the 
increments of the stationary  Pekar process with generator 
 $$
 \frac 12 \Delta+ \bigg(\frac{\nabla\psi}\psi\bigg)\cdot \nabla
 $$ 
 for any maximizer $\psi$  of the free enrgy $g_0$. The Pekar process was also earlier identified in \cite{MV14}, \cite{KM15} and \cite{BKM15} as the limiting object of the {\it{mean-field Polaron}} measures.}
\end{quote}


\section{Introduction and motivation}\label{sec-intro}

Questions on path measures pertaining to self-attractive random motions, or Gibbs measures on interacting random paths, are often motivated by the important r\^{o}le 
they play in quantum statistical mechanics. A problem similar in spirit to these considerations is connected with the {\it polaron problem}. 
The physical question arises from the description of the slow movement of a charged particle, e.g. an electron, in a crystal whose lattice sites are polarized by this motion, 
influencing the behavior of the electron and determining its {\it{effective behavior}}. For the physical relevance of this model, we refer to
the lectures by Feynman \cite{F72}. The mathematical layout of this problem was also founded by Feynman. Indeed, he introduced a path integral formulation of this problem and pointed out that the aforementioned effective behavior can be studied via studying a certain path measure. This measure is written in terms of a three dimensional
Brownian motion acting under a self-attractive Coulomb interaction: 
\begin{equation}\label{eq-Polaron-alpha}
\widehat\P_{\alpha,T}(\d \omega)= \frac 1 {Z_{\alpha,T}} \exp\bigg\{\frac \alpha 2\int_{-T}^T\int_{-T}^T \d\sigma \d s \frac{\e^{-|t-s|}}{|\omega(t)- \omega(s)|}\bigg\} \,\, \P(\d \omega),
\end{equation}
where $\alpha>0$ is a constant, or the {\it{coupling parameter}}, $\P$ refers to the law of three dimensional white noise, or Brownian increments, and $Z_{\alpha,T}$ is the normalization constant or partition function.
The physically relevant regime is the {\it{strong coupling limit}} which corresponds to studying the asymptotic behavior of the interaction as $T\to\infty$, followed by $\alpha\to\infty$. 
Since by Brownian scaling, for any $\alpha>0$,
\begin{equation}\label{eq-alpha-eps}
\int_{-\alpha^2 T}^{\alpha^2 T}\int_{-\alpha^2 T}^{\alpha^2 T} \frac{ \alpha^{-2}\e^{-\alpha^{-2}\,|t-s|}\,\d s \,\d t}{|\omega(t)-\omega(s)|} = \alpha \int_{-T}^T\int_{-T}^T \frac{ \e^{-|t-s|}\,\d s \,\d t}{|\omega(t)-\omega(s)|},
\end{equation}
in distribution, with setting $\eps=1/\alpha^2$ the strong coupling limit of the Polaron measure then reduces to studying its asymptotic behavior as $T\to \infty$ followed by $\eps\to0$.

The analysis of the logarithmic behavior of the partition function $Z_{\eps,T}$  was rigorously carried out  by Donsker and Varadhan (\cite{DV83}) resulting in the following {\it{free energy variational formula}} which also resolved Pekar's conjecture (\cite{P49}):
\begin{equation}\label{Pekarfor}
 \begin{aligned}
g_0&=\lim_{\eps\to 0}\lim_{T\to\infty}\frac 1{2T} \log Z_{\eps,T}\\
&=\sup_{\heap{\psi\in H^1(\R^3)}{\|\psi\|_2=1}} 
\Bigg\{\int_{\R^3}\int_{\R^3}\d x\d y\,\frac {\psi^2(x) \psi^2(y)}{|x-y|} -\frac 12\big\|\nabla \psi\big\|_2^2\Bigg\},
\end{aligned}
\end{equation}
with $H^1(\R^3)$ denoting the usual Sobolev space of square integrable
functions with square integrable gradient. The variational formula in \eqref{Pekarfor} was analyzed by Lieb (\cite{L76}) with the result that there is a maximizer $\psi_0$, which is unique modulo spatial shifts and is rotationally symmetric. 

Questions pertaining to the asymptotic behavior of the actual path measures $\widehat\P_{\eps,T}$, however, had remained 
unanswered on a rigorous level. In a recent article (\cite{MV18}) we have shown that there exists $\eps_0,\eps_1\in (0,\infty)$ such that if $\eps\in (0,\eps_0)\cup (\eps_1,\infty)$, 
the limit ${\widehat \P}_{\eps}=\lim_{T\to\infty}\widehat\P_{\eps,T}$  exists and identified the limit $\widehat\P_\eps$ {\it{explicitly}}. 
As a corollary, we have also deduced the central limit theorem for 
 $$
 \frac{1}{\sqrt{2T}}(\omega(T)-\omega(-T))
 $$
for the increment process under $\widehat\P_{\eps,T}$ and obtained an expression for the limiting variance $\sigma^2(\eps)\in (0,1]$. 
It is the goal of the present article to investigate and explicitly characterize the strong coupling limit 
$$
\lim_{\eps\to 0} \widehat \P_\eps.
$$

Before we turn to a more formal description of our main results, it is useful to provide an intuitive interpretation of \eqref{Pekarfor}: We remark that the interaction appearing in the Polaron problem {\it{self-attractive}} - 
for fixed $\eps>0$, the Polaron measure $\widehat\P_{\eps,T}$ favors 
paths which make $|\omega(t)- \omega(s)|$ small, when $|t- s|$ is not large. In other words, these paths tend to clump together on short time scales. 
However, for small $\eps>0$, the interaction gets more and more smeared out and the Polaron interaction, at least on an intuitive level, should behave like the {\it mean-field interaction} given by the measures 
\begin{equation}\label{Phat}
\widehat{\P}_T^{\mathrm{(mf)}}(\d \omega)=\frac 1 { Z_T^{\mathrm{(mf)}}}\, \exp\bigg\{\frac 1{2T}\int_{-T}^T\int_{-T}^T \d t \d s \,\frac 1{\big|\omega(t)-\omega(s)\big|}\bigg\} \,\d\P.
\end{equation}
The earlier result \eqref{Pekarfor} then justified
this intuition and underlined the confluent behavior (on a logarithmic scale) for the partition functions of these two models:
$$
g_0= \lim_{\eps\to 0}\lim_{T\to\infty}\frac 1T\log Z_{\eps,T}= \lim_{T\to\infty}\frac 1T\log Z_{T}^{\mathrm{(mf)}}.
$$
Thus, it is natural to guess that 
the asymptotic behavior  of the actual Polaron measures $\widehat\P_{\eps,T}$ (as $T\to\infty$ followed by $\eps\to 0$) should somehow be related to its mean-field counterpart $\lim_{T\to\infty}\widehat{\P}_T^{\mathrm{(mf)}}$,
whose  behavior was determined in \cite{BKM15} based on the theory developed in \cite{MV14} and its extension \cite{KM15}.
It was shown in (\cite{BKM15}) that the distribution 
$\widehat\P_T^{\mathrm{(mf)}}\, L_T^{-1}$ of the Brownian occupation measures $L_T=\frac 1 T\int_0^T\delta_{W_s} \,\d s $ under the mean-field model converges to the distribution 
of a random translation $[\psi_0^2\star \delta_X]\,\d z$ of $\psi_0^2 \, \d z$, with the random shift $X$ having a density $\psi_0 /\int \psi_0$. 
 Furthermore, it was also shown in \cite{BKM15} that the mean-field measures $\widehat\P_T^{\mathrm{(mf)}}$ themselves converge, as $T\to\infty$ towards
 a spatially inhomogeneous mixture of the stationary ergodic process driven by the SDE 
$$
\begin{aligned}
&\d X_t= \d W_t+ \bigg(\frac{\nabla \psi_y}{\psi_y}\bigg)(X_t)\,\d t \\
& X_0=0
\end{aligned}
$$
with $\psi_y$ being centered around $y$, with $y$ being distributed as $\psi_0/\int \psi_0$, and $\psi_0$ is centered around $0$. This result consequently led to a rigorous construction of the {\it{Pekar process}} whose heuristic definition was 
 set forth in \cite{Sp87}. In the context of the present article, the strong coupling limit $\widehat{\mathbb Q}^{\ssup \psi}=\lim_{\eps\to 0} \widehat \P_\eps= \lim_{\eps \to 0} \lim_{T\to\infty} \widehat\P_{\eps,T}$ of the 
 Polaron measures coincide with the increments of the stationary  Pekar process. Thus, in light of the above heuristic discussion, our main result also justifies the mean-field approximation of the Polaron problem 
 on the level of path measures. 

\section{Main results: Convergence of the Polaron measures to Pekar process in strong coupling}\label{sec-results}

Before we turn to the precise statements of our main results,
it is useful to set some notation which will be used throughout the sequel.

\subsection{Notation.}
We will denote by $\mathcal X=C(\R,\R^d)$ the space of all $\R^d$-valued continuous functions $\omega$ on the real line, which is equipped with the topology of uniform convergence of compact sets.
 Note that $\R^d$ acts as an additive group of  translations
on $\mathcal X$, and for any $v\in \R^d$, its action will be denote by $(\tau_v \omega)(\cdot)= \omega(v+\cdot)$. Let $\Mcal_1(\mathcal X)$ denote the space of probability measures equipped with the weak topology 
which is characterized by convergence of integrals against continuous and bounded functions on $\mathcal X$. Then $\Mcal_{\mathrm s} \subset \Mcal_1(\mathcal X)$ will stand for the space of invariant probability measures (i.e., 
$\mathbb Q\in \Mcal_{\mathrm s}$ if and only if $\mathbb Q=\tau_v \,\mathbb Q$ for all $v \in \R^d$), or the space of stationary processes taking values in $\R^d$. It is well-known that $\Mcal_{\mathrm s}$ is a convex set, and its extreme points, or the ergodic probability measures on $\mathcal X$ will be denoted by $\Mcal_{\mathrm s, \mathrm e}$. 

We will also set $\mathcal Y$ to be the space of $\R^d$-valued continuous trajectories $\omega(s,t)$ defined for $-\infty<s\leq t<\infty$ such that 
$$
\omega(s,t)+\omega(t,u)=\omega(s,u)\qquad\forall -\infty<s\leq t\leq u<\infty.
$$
The group $\{\tau_v\}_{v\in\R^d}$ also acts on $\mathcal Y$ by $(\tau_v\omega)(s,t)=\omega(s+v,t+v)$. We will denote by $\Mcal_{\mathrm{si}}$ to be the space of all probability measures on $\mathcal Y$ which are 
invariant under the above action, or the {\it{space of processes with stationary increments}}. Its extreme points, or the space of ergodic probability measures on $\mathcal Y$ will be denoted by $\Mcal_{\mathrm{si},\mathrm e}$.
Note that we have a canonical map $\Phi:\mathcal X\to \mathcal Y$ defined by 
\begin{equation}\label{Phi}
(\Phi\omega)(s,t)=\omega(t)-\omega(s)
\end{equation}
which also induces a map from $\Mcal_{\mathrm s}$ to $\Mcal_{\mathrm{si}}$. Throughout the rest of the article, $\P\in \Mcal_{\mathrm{si}}$ will stand for the law of three dimensional Brownian increments defined on the space $\mathcal Y$ equipped with the $\sigma$-algebra generated by the increments $\{\omega(t)-\omega(s)\}$.

For any $-\infty<a<b<\infty$ throughout this article we will denote by $\mathcal F_{[a,b]}$  the 
$\sigma$-algebra  generated by all increments $\{\omega(s)-\omega(r)\colon a \leq r < s \leq b\}$.
For any $\mathbb Q\in \Mcal_{\mathrm{si}}$, if $\mathbb Q_{t,\omega}$ denotes the regular conditional probability distribution  of $\mathbb Q$ given $\mathcal F_{(-\infty,t]}$, then 
$$
H_t(\mathbb Q| \P)= \E^{\mathbb Q}\bigg[h_{\mathcal F_{[0,t]}}(\mathbb Q_{t,\omega} \big | \P)\bigg]
$$
defines the entropy of the process $\mathbb Q$ with respect to $\P$ at a given time $t>0$. In the above display, for any two probability measures $\mu$ and $\nu$ on any $\sigma$-algebra of the form $\mathcal F=\mathcal F_{[a,b]}$ on $\mathcal Y$, we denoted by 
$$
h_{\mathcal F}(\mu|\nu)=\sup_f \bigg\{\int f\d\mu- \log\bigg(\int \e^f \d\nu\bigg)\bigg\}
$$ 
the relative entropy of the probability measure $\mu$ with respect to $\nu$ on $\mathcal F$, with the supremum above being taken over all 
continuous, bounded and $\mathcal F$-measurable functions. For our purposes,  
it is useful the collect some properties of $H_t(\mathbb Q| \P)$ which were deduced in (Part IV, \cite{DV75}):
\begin{lemma}\label{lemma-entropy}
Either $H_t(\mathbb Q| \P)\equiv\infty$ for all $t>0$, or, $H_t(\mathbb Q|\P)= t H(\mathbb Q|\P)$, where the map  
$H(\cdot| \P): \Mcal_{\mathrm{si}} \to [0,\infty]$, called the {\it{( specific) relative entropy of the process $\mathbb Q$ with respect to $\P$}}, satisfies the following properties: 
\begin{itemize}
\item $H(\cdot|\P)$ is convex and lower-semicontinuous in the usual weak topology. 
\item$H(\cdot|\P)$ is coercive (i.e., for any $a\geq 0$, 
the sub-level sets $\{\mathbb Q\colon H(\mathbb Q| \P)\leq a\}$ are weakly compact.
\item The map $\mathbb Q\mapsto H(\mathbb Q|\P)$ is linear. In particular, for any probability measure $\Gamma$ on $\Mcal_{\mathrm{si}}$, 
$$
H\bigg(\int \mathbb Q \, \Gamma(\d\mathbb Q)\bigg|\P\bigg)= \int H(\mathbb Q|\P) \,\Gamma(\d\mathbb Q).
$$
\end{itemize}
\end{lemma}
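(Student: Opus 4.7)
The plan is to follow the classical derivation of Donsker and Varadhan in Part IV of \cite{DV75}, which extracts the specific entropy as a normalized limit of finite-window relative entropies using the stationarity of $\mathbb{Q}$ and the independent-increments structure of $\P$. The first step is the dichotomy and the identity $H_t=tH$. Setting $\phi(t):=H_t(\mathbb{Q}|\P)$, I would use the chain rule for relative entropy on the nested filtration $\mathcal{F}_{(-\infty,0]}\subset\mathcal{F}_{(-\infty,s]}\subset\mathcal{F}_{(-\infty,s+t]}$ to split the conditional entropy over $[0,s+t]$ into the contribution over $[0,s]$ plus the conditional contribution over $[s,s+t]$. Stationarity of $\mathbb{Q}$ and the independence of $\P$-increments on disjoint intervals identify the second contribution with $\phi(t)$, yielding the Cauchy functional equation $\phi(s+t)=\phi(s)+\phi(t)$. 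As $\phi$ is nonnegative and measurable in $t$, this forces $\phi(t)=tH$ with $H\in[0,\infty]$, giving the dichotomy.

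For convexity and lower-semicontinuity, I would apply the variational definition of $h_{\mathcal{F}_{[0,1]}}(\cdot\,|\,\P)$ and, after a duality argument using Step 1, obtain a representation of the form
$$H(\mathbb{Q}|\P)=\sup_{F\in\mathcal{L}}\Big\{\E^{\mathbb{Q}}[F]-\log\E^{\P}\big[e^{F}\big]\Big\},$$
where $\mathcal{L}$ is a countable family of bounded continuous cylinder functions on $\mathcal{Y}$ measurable with respect to $\mathcal{F}_{[0,1]}$. Since each $\mathbb{Q}\mapsto\E^{\mathbb{Q}}[F]$ is weakly continuous and affine, the pointwise supremum is weakly lower-semicontinuous and convex. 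For coercivity, I would plug in test functions $F$ supported on the complement of large compact sets in the one-step marginal space $\R^d$; this yields exponential tail bounds on the law of the increment $\omega(0,1)$ that are uniform on each sub-level set $\{H(\cdot|\P)\leq a\}$. Combining this tightness with the lower-semicontinuity and invoking Prohorov's theorem gives the required weak compactness.

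The final assertion---affinity of $H$ along mixtures---is the most delicate: the inequality $\leq$ is immediate from convexity, but the reverse inequality is specific to the \emph{specific} entropy and fails on a fixed finite $\sigma$-algebra. The strategy is to write $\mathbb{Q}=\int\widetilde{\mathbb{Q}}\,\Gamma(d\widetilde{\mathbb{Q}})$ and show that the regular conditional probability $\mathbb{Q}_{t,\omega}$ coincides $\mathbb{Q}$-almost surely with the individual conditionals $\widetilde{\mathbb{Q}}_{t,\omega}$; this holds because the identity of the ergodic component $\widetilde{\mathbb{Q}}$ is measurable with respect to the invariant $\sigma$-algebra of the shift $\{\tau_v\}$, and, for a stationary ergodic process, this $\sigma$-algebra is recovered up to $\mathbb{Q}$-null sets by the past filtration $\mathcal{F}_{(-\infty,0]}$ via a Kolmogorov $0$--$1$ / tail-triviality argument applied to each ergodic component. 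Integrating the resulting pointwise identity against $\Gamma$ and using the additivity from Step 1 produces the matching lower bound. This last measurability step is the genuine obstacle: the cancellation of the ``mixing entropy'' relies both on the per-unit-time normalization implicit in the passage from $H_t$ to $H$ and on the asymptotic determinism of the ergodic type by the remote past. All other steps reduce to standard convex-duality bookkeeping, exploiting the independence of $\P$-increments.
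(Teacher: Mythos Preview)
The paper does not supply its own proof of this lemma: it is stated as a collection of facts ``which were deduced in (Part IV, \cite{DV75})'' and then used as a black box. Your sketch is exactly the classical Donsker--Varadhan derivation that the citation points to, so in that sense your approach and the paper's coincide.

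Two small technical gaps in your sketch are worth flagging. First, the variational formula you write,
\[
H(\mathbb Q|\P)=\sup_{F\in\mathcal L}\Big\{\E^{\mathbb Q}[F]-\log\E^{\P}\big[\e^{F}\big]\Big\}
\]
with $F$ ranging over bounded continuous $\mathcal F_{[0,1]}$-measurable functions, actually computes the \emph{unconditional} relative entropy $h_{\mathcal F_{[0,1]}}(\mathbb Q|\P)$, not the conditional quantity $H_1(\mathbb Q|\P)=\E^{\mathbb Q}\big[h_{\mathcal F_{[0,1]}}(\mathbb Q_{0,\omega}|\P)\big]$. The bridge is the superadditivity of $t\mapsto h_{\mathcal F_{[0,t]}}(\mathbb Q|\P)$ (which uses the independence of $\P$-increments and convexity of relative entropy), giving
\[
H(\mathbb Q|\P)=\sup_{t>0}\frac{1}{t}\,h_{\mathcal F_{[0,t]}}(\mathbb Q|\P),
\]
and it is this representation as a supremum of weakly continuous affine maps that yields lower-semicontinuity and convexity. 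Second, for coercivity, controlling only the one-step marginal $\omega(0,1)$ is not enough to get weak compactness on path space; you also need a uniform modulus-of-continuity bound on the sub-level sets (e.g.\ via the same entropy inequality applied to H\"older-type functionals, as in the exponential-tightness argument the paper carries out later in Lemma~\ref{lemma-1-pf-thm1}). With these two points filled in, your outline is complete.
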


\subsection{Main results.}
We will now provide a precise description of our main results. For any fixed $\eps>0$ and $T>0$, let 
\begin{equation}\label{eq-Polaron-1}
\d\widehat\P_{\eps,T}=\frac 1 {Z_{\eps,T}} \exp\bigg[ \frac 1 2\int_{-T}^T\int_{-T}^T \frac{ \eps\e^{-\eps\,|t-s|}\,\d s \,\d t}{|\omega(t)-\omega(s)|} \bigg] \,\,\d\P_T
\end{equation}
denote the Polaron measure on the space $\mathcal Y$ equipped with the $\sigma$-algebra generated 
by the increments $\{\omega(t)-\omega(s): -\infty <s < t< \infty\}$. We remind the reader that $\P_T$ is the restriction of the law $\P$ of three dimensional Brownian increments to $[-T,T]$, 

We also recall that (\cite{DV83})
$$
g_0= \sup_{\heap{\psi\in H^1(\R^3)}{\|\psi\|_2=1}} 
\Bigg\{\int_{\R^3}\int_{\R^3}\d x\d y\,\frac {\psi^2(x) \psi^2(y)}{|x-y|} -\frac 12\big\|\nabla \psi\big\|_2^2\Bigg\},
$$
and the non-empty set $\mathfrak m=\{\psi_0^2\star \delta_x\colon x\in \R^3\}$ of the maximizing densities for $g_0$ consists of spatial translations of some $\psi_0\in H^1(\R^3)$ with $\int_{\R^3} \psi_0^2=1$ (\cite{L76}).
Let $\widehat{\mathbb Q}^{\ssup \psi}$ be the increments of the stationary ergodic diffusion process 
with generator 
$$
L_\psi= \frac 12 \Delta+ \bigg(\frac{\nabla\psi}{\psi}\bigg)\,\cdot\nabla
$$
for any maximizer $\psi\in \mathfrak m$. We stress that, while $\psi$ and the associated stationary process with generator $L_\psi$ need not be unique, the increments of this stationary process are uniquely defined. 

Here is the statement of our main result: 
\begin{theorem}[Convergence of the Polaron measure to the increments of the Pekar process in strong coupling]\label{thm2}
With $\widehat{\mathbb Q}^{\ssup \psi}$ being the increments of the stationary Pekar process,
$$
\lim_{\eps\to 0} \, \lim_{T\to\infty} \widehat\P_{\eps,T}= \widehat{\mathbb Q}^{\ssup \psi}.
$$
\end{theorem}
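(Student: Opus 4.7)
The plan is to treat $\widehat\P_\eps = \lim_{T\to\infty}\widehat\P_{\eps,T}\in\Mcal_{\mathrm{si}}$ (whose existence for sufficiently small $\eps$ is given in \cite{MV18}) as a family of stationary-increment processes indexed by $\eps$, establish tightness as $\eps\to 0$, and identify every weak subsequential limit as $\widehat{\mathbb Q}^{\ssup\psi}$. I see four natural steps.

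\emph{Step 1 (Tightness).} First I would derive a uniform entropy bound
$$\limsup_{\eps\to 0} H(\widehat\P_\eps \mid \P) < \infty.$$
This can be extracted either from the dual Donsker--Varadhan representation combined with the asymptotics $(2T)^{-1}\log Z_{\eps,T} \to g_0 < \infty$, or directly from the explicit description of $\widehat\P_\eps$ given in \cite{MV18}. Coercivity of $H(\cdot\mid\P)$ in Lemma~1.1 then yields relative compactness of $\{\widehat\P_\eps\}_{\eps > 0}$ in the weak topology on $\Mcal_{\mathrm{si}}$.

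\emph{Step 2 (Variational characterization).} Let $\mathbb Q^\star$ be any subsequential weak limit along some $\eps_k \to 0$. Passing to an ergodic decomposition $\mathbb Q^\star = \int \mathbb Q\,\Gamma(\d\mathbb Q)$, one assigns a spatial occupation density $\psi_{\mathbb Q}^2$ to each ergodic component $\mathbb Q$ through the translation-invariant compactification developed in \cite{MV14, KM15}, and defines the effective mean-field Coulomb functional
$$\Xi(\mathbb Q^\star) = \int \Bigg(\iint \frac{\psi_{\mathbb Q}^2(x)\psi_{\mathbb Q}^2(y)}{|x-y|}\,\d x\,\d y\Bigg)\,\Gamma(\d\mathbb Q).$$
The goal is then to establish the matching inequalities
$$g_0 \le \Xi(\mathbb Q^\star) - H(\mathbb Q^\star\mid\P) \quad\text{and}\quad \Xi(\mathbb Q^\star) - H(\mathbb Q^\star\mid\P) \le g_0.$$
The first follows by passing to the limit $\eps\to 0$ in the partition-function lower bound, combined with lower semicontinuity of $H(\cdot\mid\P)$; the second follows from the Pekar variational formula applied componentwise via the linearity of $H(\cdot\mid\P)$ in Lemma~1.1.

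\emph{Step 3 (Identification and convergence).} Equality in Step~2 forces every ergodic component of $\mathbb Q^\star$ to attain the Pekar supremum, so by Lieb's uniqueness result \cite{L76} each $\psi_{\mathbb Q}$ must lie in the single orbit $\mathfrak m = \{\psi_0(\cdot - x)\colon x\in \R^3\}$. The corresponding stationary diffusions, with generator $L_\psi = \tfrac{1}{2}\Delta + (\nabla\psi/\psi)\cdot\nabla$, all share the \emph{same} increment law $\widehat{\mathbb Q}^{\ssup\psi}$, since spatial translation acts trivially on the increments. Hence $\mathbb Q^\star = \widehat{\mathbb Q}^{\ssup\psi}$, independent of the subsequence, and relative compactness upgrades to full convergence $\lim_{\eps\to 0}\widehat\P_\eps = \widehat{\mathbb Q}^{\ssup\psi}$.

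\emph{Main obstacle.} The technical heart of the argument is Step~2, specifically showing that the strong-coupling limit of the exponential Coulomb interaction
$$\frac{1}{2} \int_{-T}^T\int_{-T}^T \frac{\eps\,\e^{-\eps|t-s|}}{|\omega(t)-\omega(s)|}\,\d s\,\d t$$
evaluated along an ergodic stationary-increment process reduces, to leading order, to the mean-field Coulomb functional $\iint \psi^2(x)\psi^2(y)/|x-y|\,\d x\,\d y$. The kernel $\eps\,\e^{-\eps|t-s|}$ concentrates on the time scale $|t-s|\sim\eps^{-1}$, so the averaging is long-range in time and its rigorous treatment requires the translation-invariant compactification and concentration techniques developed in \cite{MV14, KM15, BKM15}, transferred from the clean mean-field averaging $\tfrac{1}{2T}\iint$ to the exponential-kernel averaging appearing here.
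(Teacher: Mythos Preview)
Your overall strategy (uniform entropy bound $\Rightarrow$ tightness $\Rightarrow$ identify every subsequential limit via the Pekar variational problem) matches the paper's. The paper also uses that $\widehat\P_\eps$ is a \emph{maximizer} of $g(\eps)$ (Theorem~\ref{thm1}), which you use implicitly when you pass the partition-function asymptotics to the limit in Step~2.

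There is, however, a genuine gap in Step~2. You write that one ``assigns a spatial occupation density $\psi_{\mathbb Q}^2$ to each ergodic component $\mathbb Q$'' of the limit $\mathbb Q^\star\in\Mcal_{\mathrm{si}}$. But an ergodic process with stationary \emph{increments} need not arise as the increments of any stationary process, and hence need not possess a spatial marginal at all: Brownian increments themselves are the prototypical counterexample. The compactification of \cite{MV14,KM15} is built for probability measures on $\R^d$ modulo translations (i.e., for occupation measures), not for elements of $\Mcal_{\mathrm{si}}$, and you do not explain how to extract from an arbitrary ergodic $\mathbb Q\in\Mcal_{\mathrm{si}}$ a candidate measure on $\R^d$ to which that machinery could be applied. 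Without this, the functional $\Xi(\mathbb Q^\star)$ in your Step~2 is not even defined, and the matching inequalities cannot get off the ground.

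The paper resolves precisely this issue by a different, self-contained device. It introduces a mixed ``wea-gue'' topology on measures on $\mathcal X_0\otimes\R^d$ (weak in the path variable, vague in the spatial variable) and proves an abstract dichotomy (Theorem~\ref{thm3} and Corollary~\ref{cor-thm3}): for an ergodic $\mathbb Q\in\Mcal_{\mathrm{si,e}}$ with $H(\mathbb Q|\P)<\infty$, either
$$
\lim_{\eps\to 0}\,\eps\,\E^{\mathbb Q}\bigg[\int_0^\infty \frac{\e^{-\eps t}}{|\omega(t)-\omega(0)|}\,\d t\bigg]=0,
$$
or $\mathbb Q$ \emph{is} the increment law of some stationary process, hence carries a bona fide spatial marginal $\mu$. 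Since $g_0>0$, the first alternative is excluded for any limit of maximizers, which is what produces the marginal you need. The Coulomb singularity is handled separately via a uniform truncation estimate (Lemma~\ref{lemma-Coulomb}). In short, the step you flagged as the ``main obstacle'' is exactly where the paper's new input lies, and it is not a transfer of the mean-field techniques of \cite{MV14,KM15,BKM15} but rather the dichotomy Theorem~\ref{thm3} proved directly on $\Mcal_{\mathrm{si}}$.
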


An ingredient for the proof Theorem \ref{thm2} is the following result, which  is based on a {\it{strong large deviation principle}} for the distribution of the empirical process 
of Brownian increments.

\begin{theorem}\label{thm1}
Fix $\eps>0$. Then 
\begin{itemize}
\item 
\begin{equation}\label{eq-1-thm1}
\begin{aligned}
\lim_{T\to\infty} \frac 1 T \log Z_{\eps,T}&= g(\eps)\\
&=\sup_{\mathbb Q\in  \Mcal_{\mathrm{si}}} \bigg[ \E^{\mathbb Q}\bigg\{\int_0^\infty \frac{\eps\e^{-\eps r}\,\d r}{|\omega(r)-\omega(0)|}\bigg\}- H(\mathbb Q|\P)\bigg].
\end{aligned}
\end{equation}
\item The supremum in \eqref{eq-1-thm1} is attained for some process $\mathbb Q_\eps\in \Mcal_{\mathrm{si}}$  such that $H(\mathbb Q_\eps|\P)<\infty$. In other words, 
the set $\mathfrak m_\eps$ of maximizers  of $g(\eps)$ is non-empty. 
\item Let $\widehat\P_\eps$ be the stationary limit of $\widehat \P_{\eps,T}$ as $T\to\infty$ (see Section \ref{sec-Review} for a precise definition and expression for $\widehat\P_\eps$). Then $\widehat\P_\eps\in\mathfrak m_\eps$.
\end{itemize}
\end{theorem}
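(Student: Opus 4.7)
The plan is to derive (i) via a Donsker--Varadhan large deviation principle for the empirical process of Brownian increments, combined with a Varadhan-type formula that accommodates the Coulomb singularity; to derive (ii) from the coercivity and lower semicontinuity of $H(\cdot|\P)$ asserted in Lemma \ref{lemma-entropy}; and to deduce (iii) by identifying $\widehat\P_\eps$ as an explicit Gibbs-type transform of $\P$.

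For (i), I would first rewrite the exponent as an additive functional of the empirical increment process. By symmetry of the integrand and the substitution $r=t-s$,
\[
\tfrac12\int_{-T}^T\int_{-T}^T\frac{\eps\,\e^{-\eps|t-s|}}{|\omega(t)-\omega(s)|}\,\d s\,\d t \;=\; \int_{-T}^{T} F_{T-t}(\tau_t\omega)\,\d t,
\]
where $F_u(\omega)=\int_0^u \eps\,\e^{-\eps r}/|\omega(r)-\omega(0)|\,\d r$ and $\tau_t$ denotes the shift. Since $\eps\,\e^{-\eps r}$ has integrable exponential decay in $r$, replacing $F_{T-t}$ by its pointwise limit $F:=F_\infty$ introduces only a boundary error of order $o(T)$, so the exponent equals $2T\int F\,\d R_T + o(T)$, with $R_T=(2T)^{-1}\int_{-T}^T\delta_{\tau_t\omega}\,\d t$ the empirical process. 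Under $\P$, $R_T$ obeys an LDP on $\Mcal_{\mathrm{si}}$ with rate $H(\cdot|\P)$; because $F$ has a Coulomb singularity along $\omega(r)=\omega(0)$ and the action of $\R^3$-translations lacks compactness, classical Varadhan's lemma does not apply. The key tool is the strong LDP alluded to in the statement, adapting the compactification technique of \cite{MV14,KM15} to the increment empirical process; combined with exponential-tightness bounds stemming from Gaussian tails and $\P$-integrability of $\e^{c\int_0^t|\omega(r)|^{-1}\d r}$ for small $c>0$, this yields \eqref{eq-1-thm1}.

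For (ii), choose a maximizing sequence $\mathbb Q_n$. Since $F\ge 0$ and $g(\eps)<\infty$, one has $\sup_n H(\mathbb Q_n|\P)<\infty$; by coercivity of $H(\cdot|\P)$ a subsequence converges weakly to some $\mathbb Q_\eps\in\Mcal_{\mathrm{si}}$ with finite entropy. Lower semicontinuity of $H(\cdot|\P)$ handles the entropy term, while upper semicontinuity of $\mathbb Q\mapsto\E^{\mathbb Q}[F]$ on $\{H(\cdot|\P)\le C\}$ follows by truncating $F$ near the diagonal and bounding the truncation error uniformly via the entropy inequality, placing $\mathbb Q_\eps$ in $\mathfrak m_\eps$.

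For (iii), the stationary limit $\widehat\P_\eps$ is constructed in Section \ref{sec-Review} (following \cite{MV18}) as an explicit stationary process obtained by a Doob-type transform of $\P$ with the top eigenfunction of the semigroup associated with $F$. A direct computation of its specific entropy and its expectation of $F$ yields, via the Gibbs variational principle, $\E^{\widehat\P_\eps}[F]-H(\widehat\P_\eps|\P)=g(\eps)$, so $\widehat\P_\eps\in\mathfrak m_\eps$. The main obstacle is the Varadhan step in (i): justifying the interchange of limit and supremum in the presence of the $1/|x|$ singularity and the non-compact translation action; this is precisely the role of the strong LDP announced in the statement.
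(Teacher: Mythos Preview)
Your overall architecture for (i) is the same as the paper's, but two of your implementation choices diverge from what the paper actually does, and in (ii) there is a genuine gap.

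For (i), you propose obtaining the strong LDP on $\Mcal_{\mathrm{si}}$ by ``adapting the compactification technique of \cite{MV14,KM15}''. The paper does \emph{not} compactify; the whole point of passing from $\Mcal_{\mathrm s}$ to $\Mcal_{\mathrm{si}}$ (cf.\ Remark~\ref{rmk1-thm1}) is that increments of Brownian motion are i.i.d., so exponential tightness of $\P\,R_T^{-1}$ is obtained directly and elementarily from Borell's inequality applied to the H\"older seminorm $\sup_{0\le s<t\le 1}|\omega(t)-\omega(s)|/|t-s|^{1/4}$ (Lemma~\ref{lemma-1-pf-thm1}). This is simpler than any quotient construction and already delivers a rate function with compact level sets. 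The Coulomb singularity is then handled exactly along the lines you indicate (truncation $V_\eta$ plus an entropy/Khasminskii bound on the remainder), so that part of your sketch matches the paper.

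In (ii) your sentence ``Since $F\ge 0$ and $g(\eps)<\infty$, one has $\sup_n H(\mathbb Q_n|\P)<\infty$'' is not justified: from $\E^{\mathbb Q_n}[F]-H(\mathbb Q_n|\P)\to g(\eps)$ both terms could diverge together. The paper's device (Lemma~\ref{lemma-3-pf-thm1}) is to use the free energy at a \emph{larger} coupling: for some $C>1$ one has $C\,\E^{\mathbb Q_n}[F]-H(\mathbb Q_n|\P)\le g_C(\eps)<\infty$, and combining this with $\E^{\mathbb Q_n}[F]\ge H(\mathbb Q_n|\P)$ (which holds for a maximizing sequence since already $\mathbb Q=\P$ gives a nonnegative value) yields $(C-1)H(\mathbb Q_n|\P)\le g_C(\eps)$. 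Your entropy--truncation idea for upper semicontinuity of $\E^{\mathbb Q}[F]$ on entropy sublevel sets is correct and matches Lemma~\ref{lemma-Coulomb}.

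For (iii) your route is genuinely different and looks hard to carry out. You describe $\widehat\P_\eps$ as ``a Doob-type transform of $\P$ with the top eigenfunction'' and propose a direct Gibbs-variational computation of $\E^{\widehat\P_\eps}[F]-H(\widehat\P_\eps|\P)$. But as recalled in Section~\ref{sec-Review}, $\widehat\P_\eps$ is constructed in \cite{MV18} as a \emph{mixture} of Gaussian measures over a tilted birth--death/renewal structure, not as an eigenfunction transform, and a direct evaluation of its specific entropy from that description is far from immediate. The paper instead argues indirectly (Lemma~\ref{lemma-4-pf-thm1}): the LDP for $\widehat\P_{\eps,T}\,R_T^{-1}$ forces every weak limit point to be supported on $\mathfrak m_\eps$, and since $\widehat\P_{\eps,T}\to\widehat\P_\eps$ with $\widehat\P_\eps$ stationary (so that $R_T\to\widehat\P_\eps$ under $\widehat\P_\eps$), one concludes $\widehat\P_\eps\in\mathfrak m_\eps$. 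This avoids any explicit entropy computation for $\widehat\P_\eps$.
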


\begin{remark}\label{rmk1-thm1}
As remarked earlier, a variational formula for the free energy $g(\eps)$ was first obtained in \cite{DV83} where the supremum in \eqref{eq-1-thm1} was taken over all stationary processes $\mathbb Q\in \Mcal_{\mathrm{s}}$.
This result was a consequence of a weak large deviation principle for the empirical process of Brownian motion. However, in this case, the supremum over $\mathbb Q\in \Mcal_{\mathrm{s}}$ may not be attained. 
This issue is instantaneously resolved if we exploit the underlying i.i.d. structure of the noise providing exponential tightness. 
The resulting free energy variational formula with the supremum over $\mathbb Q\in \Mcal_{\mathrm{si}}$ is coercive which gurantees existence of (at least one) maximizer. 
\end{remark}


\begin{remark}\label{rmk3-thm1}
The fact that for any fixed $\eps>0$, the Polaron measures $\{\widehat\P_{\eps,T}\}_T$ are tight is an immediate consequence of the underlying strong large deviation principle which proves Theorem \ref{thm1}. However, 
we crucially use the existence of the limit $\lim_{T\to\infty}\widehat\P_{\eps,T}=\widehat\P_\eps$ and the stationarity of $\widehat\P_\eps$ to conclude that $\widehat\P_\eps\in\mathfrak m_\eps$.
\end{remark}

The rest of the article is organized as follows. In Section \ref{sec-Review} we will shortly review most important assertions derived in \cite{MV18} regarding the 
$T\to\infty$-asymptotic behavior of the Polaron measures $\widehat\P_{\eps,T}$ for fixed $\eps>0$. In Section \ref{sec-abstract-thm} we will prove some general results concerning processes with stationary increments, 
and in Section \ref{sec-pf-thm1-thm2} we will prove Theorem \ref{thm1} Theorem \ref{thm2}. 

\subsection{Review: Identification of the Polaron measure as $T\to\infty$.}\label{sec-Review} 
For convenience, in this section we will be consistent with the notation in \cite{MV18} and we will work with the Polaron measure $\widehat\P_{\alpha,T}$ defined in \eqref{eq-Polaron-alpha}. 
Recall that the scaling relation \eqref{eq-alpha-eps} provides the link between the interactions in $\widehat\P_{\alpha,T}$ and $\widehat\P_{\eps,T}$ (defined in \eqref{eq-Polaron-1}) with the choice $\alpha=\frac 1 {\sqrt\eps}$.

The first crucial step in \cite{MV18}  is a representation of the Polaron measure $\widehat\P_{\alpha,T}$ for any $\alpha>0$ and $T>0$, 
as a mixture of Gaussian measures. 
Note that the Coulomb potential can be written as 
\begin{equation}\label{eq-Coulomb-Gaussian}
 \frac1{|x|}=\frac 1 {\Gamma(1/2)} \int_0^\infty \d u \,\e^{-u|x|^2} \,\frac 1 {\sqrt u}= c_0\int_0^\infty \e^{-\frac 12u^2|x|^2} \, \d u
\end{equation}
where $c_0=\sqrt{\frac{2}{\pi}}$. Then with ${\widehat\P}_{\alpha,T}=\frac 1 {Z_{\alpha,T}} \mathcal H_{\alpha,T}(\omega)\d\P$, we can expand the exponential weight 
$$
\mathcal H_{\alpha,T}(\omega)
=\exp\bigg\{\frac \alpha 2\int_{-T}^T\int_{-T}^T\frac{\e^{-|t-s|\d s \d t}}{|\omega(t)-\omega(s)|}\bigg\}
$$
 into a power series and invoke the above representation of the Coulomb potential to get 
\begin{equation}\label{eq-Polaron-01}
\begin{aligned}
\mathcal H_{\alpha,T}&=\sum_{n=0}^\infty \frac{\alpha^n}{n!} \bigg[\int\int_{-T\leq s \leq t\leq T} \frac{\e^{-|t-s|}\,\d t \, \d s}{|\omega(t)-\omega(s)|}\bigg]^n \\
&= \sum_{n=0}^\infty \frac {1}{n!} \prod_{i=1}^n \bigg[\bigg(\int\int_{-T\leq s_i \leq t_i\leq T} \big(\alpha\,\e^{-(t_i-s_i)}\,\d s_i\, \d t_i\big)\bigg)\,\,  \bigg( c_0 \int_0^\infty \, \d u_i \e^{-\frac 12 u_i^2 |\omega(t_i)-\omega(s_i)|^2}\bigg)\bigg].
\end{aligned}
\end{equation}
Note that, when properly normalized, $\mathcal H_{\alpha,T}$ is a mixture of (negative) exponentials of positive definite quadratic forms. 
Next, in the second display in \eqref{eq-Polaron-01}, we 
have a Poisson point process taking values on the space of finite intervals $[s,t]$ of $[-T,T]$  with intensity measure 
$$
\gamma_\alpha(\d s\,\d t)=\alpha \e^{-(t-s)}\d s\d t
$$
 on $-T\le s<t\le T$. Then it turns out that, for any $\alpha>0$ and $T>0$, 
we have a representation 
\begin{equation}\label{eq-Polaron-03}
\widehat\P_{\alpha,T}(\cdot)=\int_{\widehat{\mathcal Y}} \mathbf P_{\hat\xi,\hat u} (\cdot) \,{\widehat  \Theta}_{\alpha,T}(\d\hat\xi\,\d \hat u).
\end{equation}
of the Polaron measure as a superposition of Gaussian measures $\mathbf P_{\hat\xi,\hat u}$ indexed by $(\hat\xi,\hat u)\in \widehat{\mathcal Y}$ with $\widehat{\mathcal Y}$ being the space
of all collections of (possibly overlapping) intervals $\widehat\xi=\{[s_1,t_1],\dots,[s_n,t_n]\}_{n\geq 0}$ and strings $\widehat u\in (0,\infty)^n$, while
the ``mixing measure" ${\widehat \Theta}_{\alpha,T}$ is a suitably defined probability measure on the space $\widehat{\mathcal Y}$.
As an immediate corollary, we obtain 
that for any fixed $\alpha>0$ and $T>0$, the variance of any linear functional on the space of increments with respect to $\widehat\P_{\alpha,T}$ is dominated by 
the variance of the same with respect to the restriction $\P_T$ of  $\P$ to $[-T,T]$. The details of the Gaussian representation \eqref{eq-Polaron-03} can be found in Theorem 2.1 in \cite{MV18}.

Then the limiting behavior $\lim_{T\to\infty} \widehat\P_{\alpha,T}$ of the Polaron (and hence, the central limit theorem for the rescaled increment process) 
follows once we prove a law of large numbers for the mixing measure ${\widehat\Theta}_{\alpha,T}$. This measure is defined as a tilted probability measure w.r.t. the law of the aforementioned Poisson process
with intensity $\gamma_{\alpha,T}$. Note that, the union of any collection of intervals $\{[s_i,t_i]\}$, which is a typical 
realization of this Poisson process,
need not be connected. In fact, the union is a  disjoint union of connected intervals, with gaps in between, starting and ending with gaps $[-T,\min\{s_i\}]$ and $[\max\{t_i\},T]$.
It is useful to interpret this Poisson process as a birth-death process along with some extra information (with ``birth of a particle at time $s$ and  the 
same particle dying at time $t$")  that links each birth with the corresponding death.
The birth rate is 
$$
b_{\alpha,T}(s)=\alpha(1-\e^{-(T-s)})
$$
and the death rate is 
$$
d_{\alpha,T}(s)=\frac 1 {1-\e^{-(T-s)}}
$$
and both rates are computed from the intensity measure $\gamma_{\alpha,T}$. 
As $T\to\infty$, the birth and death rates converge to constant birth rate $\alpha>0$ and death rate $1$, and 
we imagine the infinite time interval $(-\infty, \infty)$ to split into an alternating sequence 
of ``gaps" and ``clusters" of overlapping intervals. The gaps are called {\it{dormant periods}} (when no individual is alive and the population size is zero) and will be denoted by $\xi^\prime$, while 
each {\it{cluster}} or an {\it{active period}} is a collection $\xi=\{[s_i,t_i]\}_{i=1}^{n(\xi)}$ of overlapping intervals with the union $\mathcal J(\xi)=\cup_{i=1}^{n(\xi)} [s_i,t_i]$ being a connected interval without any gap.
Note that, inception times of both dormant and active periods possess the {\it{regeneration property}}, i.e., all prior information is lost and there is a fresh start. 
Also, on any dormant period $\xi^\prime$, the aforementioned Gaussian measure $\mathbf P_{\xi^\prime,u}\equiv \P$ corresponds only to the law of Brownian increments, 
and independence of increments on disjoint intervals (i.e., alternating sequence of dormant and active periods) leads to a
``product structure"  for  the mixing measure ${\widehat\Theta}_{\alpha,T}$. Indeed, if $\Pi_\alpha$ denotes the law of the above birth death process in a single active period 
with constant birth rate $\alpha>0$ and death rate $1$,
then a crucial estimate (Theorem 3.1, \cite{MV18}) shows that for any $\alpha>0$, there exists  
$\lambda_0(\alpha)>0$ such that, for $\lambda>\lambda_0(\alpha)$
$$
q(\lambda)= \E^{\Pi_{\alpha}\otimes\mu_\alpha} \big[\exp\{-\lambda[|\mathcal J(\xi)| +|\xi^\prime|]\}\mathbf F(\xi)\big]<\infty,
$$
where $\mu_\alpha$ is exponential distribution of parameter $\alpha$ and 
$$
\begin{aligned}
&\mathbf F(\xi)=\bigg(\sqrt{\frac 2\pi}\bigg)^{n(\xi)} \int_{(0,\infty)^{n(\xi)}}\mathbf \Phi(\xi,\bar u)\,\,\d \bar u \end{aligned}
$$
and $\mathbf \Phi(\xi,\bar u)= \E^\P[\exp\{-\frac 12 \sum_{i=1}^{n(\xi)} u_i^2 |\omega(t_i)-\omega(s_i)|^2\}]$ is the normalizing constant for the Gaussian measure $\mathbf P_{\xi,\bar u}$ in one active period $(\xi,\bar u)$.
It turns out that, there exists $\alpha_0,\alpha_1\in(0,\infty)$ such that when $\alpha\in(0,\alpha_0)$ or $\alpha\in(\alpha_1,\infty)$, 
there exists $\lambda=\lambda(\alpha)$ such that $q(\lambda)=1$. 

For such a choice of $\lambda$ (which forces $q(\lambda)=1$), the underlying renewal structure of the active and dormant periods imply that the mixing measure $\widehat\Theta_{\alpha,T}$ of the Polaron 
$\widehat\P_{\alpha,T}$ converges as $T\to\infty$ to the 
stationary version $\widehat{\mathbb Q}_\alpha$ on $\R$ obtained by alternating the limiting mixing measure on each active period $\xi$ defined as
$$
\widehat{\Pi}_\alpha(\d\xi\,\d \bar u)= \bigg(\frac {\alpha}{\lambda+\alpha}\bigg)\bigg[  \e^{-\lambda |{\mathcal J}(\xi)|}\,\,\bigg(\frac 2\pi\bigg)^{\frac {n(\xi)} 2}\,\big[\mathbf \Phi(\xi, \bar u)\,\, \d \bar u\big]\bigg] \,\,\Pi_\alpha(\d\xi),
$$ 
and  as the tilted exponential distribution 
$$
\widehat\mu_\alpha(\d\xi^\prime)=\bigg(\frac{\alpha+\lambda}\alpha\bigg)\,\, \e^{-\lambda|\xi^\prime|} \,\,\mu_\alpha(\d\xi^\prime)
$$
on each dormant period $\xi^\prime$ with expected waiting time  $(\lambda+\alpha)^{-1}$. 

Thus, given the Gaussian representation \eqref{eq-Polaron-03}, 
 the Polaron measure $\widehat\P_{\alpha,T}$ then converges  as $T\to\infty$, in total variation on finite intervals in $(-\infty,\infty)$,  to
$$
\widehat\P_\alpha(\cdot)=\int \mathbf P_{\hat\xi, \hat u}(\cdot)\, \,\widehat {\mathbb Q}_{\alpha}(\d\hat\xi\,\d\hat u),$$
where on the right hand side, $\mathbf P_{\hat\xi, \hat u}$ is the product of the Gaussian measures $\mathbf P_{\xi, \bar u}$ on the active intervals and law $\P$ of Brownian increments on dormant intervals, and 
the integral above is taken over the space of all active intervals (with $\bar u=(u_i)_{i=1}^{n(\xi)}$ and $u_i$'s being attached to each birth with the corresponding death) as well as dormant intervals. See Theorem 4.1 in \cite{MV18} for details.

The central limit theorem for the rescaled increment process $(2T)^{-1/2}\, [\omega(T)-\omega(-T)]$ under $\widehat\P_{\alpha,T}$ as $T\to\infty$ also follows 
 readily. It turns out that the variance in each dormant period $\xi^\prime$ is just the expected length $(\alpha+\lambda)^{-1}$ of the empty period, 
 and 
 the resulting central limit 
 covariance matrix is $\sigma^2(\alpha) I$, where for any unit vector $v\in\R^3$ and any active period $\xi=[0,\sigma^\star]$, 
 $$
  \sigma^2(\alpha)= \frac{(\alpha+\lambda)^{-1}+ \E^{\widehat{\Pi}_\alpha}\big[\E^{\mathbf P_{\xi,\bar u}}[\langle v,\omega(\sigma^\star)-\omega(0)\rangle^2]\big]}{(\alpha+\lambda)^{-1}+  \E^{\widehat{\Pi}_\alpha}[\sigma^\star]}\in (0,1].
 $$
 We refer to Theorem 4.2 in \cite{MV18} for further details.


\section{Some results for processes with stationary increments.}\label{sec-abstract-thm}

 In order to derive Theorem \ref{thm2}, we will need to prove first an abstract result. We remind the reader that $\mathcal X$ denotes the space of all continuous functions from $\R$ to $\R^d$. If we denote by 
$\mathcal X_0 =\{\xi\in \mathcal X\colon \xi(0)=0\}$, then there is a one-to-one correspondence between $\mathcal X$ and $\mathcal X_0\otimes \R^d$ (i.e., given $\omega \in \mathcal X$, we can set $\xi(t)= \omega(t)-\omega(0)$, 
and $a=\omega(0)$ so that $(\xi,a)\in \mathcal X_0\otimes \R^d$). Note that the action of $\{\tau_v\}_{v\in\R}$ with $(\tau_t\omega)(s)= \omega(s+t)$ now acts on $\mathcal X_0\otimes \R^d$ as 
$\tau_t (\xi,a) =(\xi^\prime, a^\prime)$ where $\xi^\prime(t)= \xi(s+t)- \xi(t)$ and $a^\prime= a+ \xi(t)$. 

We need to define a suitable topology on finite measures on the space $\mathcal X_0\otimes \R^d$. Note that, weak convergence of a sequence of finite measures $(\mu_n)_n$ on a Polish space is characterized by convergence of integrals $\int f \d\mu_n$ for continuous and bounded functions $f$, while vague convergence of $(\mu_n)_n$ requires convergence of the integrals w.r.t. continuous functions with compact support, or continuous functions vanishing at infinity. We remark that in the vague topology of measures, a sequence of finite measures always finds a convergent subsequence, while for weakly convergent subsequences are determined by uniform tightness. 

We say that, a sequence of finite measures $(\mu_n)_n$ on $\mathcal X_0\otimes \R^d$ converges {\it{wea-guely}} to $\mu$  if and only if 
\begin{equation}\label{def-weague}
\int F(\xi,a) \d\mu_n \to \int F(\xi,a) \d\mu
\end{equation}
for all continuous bounded function $F:\mathcal X_0\otimes\R^d \to \R$ such that 
$$
\lim_{a\to\infty} \sup_{\xi\in \mathcal X_0} F(\xi,a)=0. 
$$
We remark that any sequence of measures $(\mathbf P_n)_n$ on $\mathcal X_0\otimes \R^d$, whose projection on $\mathcal X_0$ is uniformly tight, has a wea-guely convergent subsequence. 
Here is the first result of this section. 

\begin{theorem}\label{thm3}
Let $\mathbf P \in \Mcal_{\mathrm{si,e}}(\mathcal X)$ be an almost surely continuous ergodic process with stationary increments with values in $\R^d$. Then either, 
\begin{equation}\label{eq1-thm3}
\lim_{\eps\to 0} \eps \,\, \E^{\mathbf P} \bigg[\int_0^\infty  \e^{-\eps t}\,\,  V\big(\omega(t)-\omega(0)\big) \,\ \d t\bigg]=0,
\end{equation}
for all continuous functions $V:\R^d \to \R$ with compact support, or there is a stationary process $\mathbb Q\in \Mcal_{\mathrm{s}}(\mathcal X)$ with $\mathbf P$ being the distribution of its increments. 
\end{theorem}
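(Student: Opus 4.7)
The plan is to build a candidate stationary measure on $\mathcal X$ by time-averaging the ``initial-condition-zero'' version of $\mathbf P$, and to read the dichotomy as whether this average keeps mass in a bounded region of $\R^d$ or leaks it to infinity. Using the identification $\mathcal X \cong \mathcal X_0 \otimes \R^d$, I view $\mathbf P$ as a probability measure on $\mathcal X_0$ and define, for each $\eps > 0$, the probability measure
\[
\mu_\eps \;=\; \eps \int_0^\infty e^{-\eps t}\,(\tau_t)_*(\mathbf P \otimes \delta_0)\,dt
\]
on $\mathcal X_0 \otimes \R^d$. The action formula $\tau_t(\xi,0)=(\tau_t\xi,\xi(t))$ combined with stationarity of increments (which forces $\tau_t\xi \sim \mathbf P$ whenever $\xi \sim \mathbf P$) shows that the $\mathcal X_0$-marginal of $\mu_\eps$ equals $\mathbf P$ for every $\eps$, while the $\R^d$-marginal $\nu_\eps$ is precisely the exponentially averaged law of $\omega(t)-\omega(0)$. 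Hence \eqref{eq1-thm3} is nothing but the statement ``$\nu_\eps \to 0$ vaguely on $\R^d$''.

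Next I would record the asymptotic shift-invariance of $\mu_\eps$: writing $\lambda_t=(\tau_t)_*(\mathbf P\otimes\delta_0)$ and using $\tau_{h+t}=\tau_h\tau_t$, a routine change of variables gives
\[
(\tau_h)_*\mu_\eps - \mu_\eps \;=\; \eps(e^{\eps h}-1)\int_h^\infty e^{-\eps s}\,\lambda_s\,ds \;-\; \eps\int_0^h e^{-\eps t}\,\lambda_t\,dt,
\]
whose total-variation norm is $O_h(\eps)$ for every fixed $h\in\R$. Now suppose \eqref{eq1-thm3} fails; fix $V\in C_c(\R^d)$ and a sequence $\eps_n\downarrow 0$ with $\bigl|\int V\,d\nu_{\eps_n}\bigr|$ bounded away from zero. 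Because the $\mathcal X_0$-marginal of every $\mu_{\eps_n}$ equals the single tight probability $\mathbf P$, the wea-gue compactness remark recalled before the theorem furnishes a wea-guely convergent subsequence $\mu_{\eps_n}\to\mu$. Testing against the wea-gue test function $V(a)$ yields $\int V\,d\mu\neq 0$, so $\mu\neq 0$.

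The main delicacy --- and the step I expect to demand the most care --- is establishing $(\tau_h)_*\mu=\mu$ for every $h$, since $F\circ \tau_h$ need not be a wea-gue test function when $F$ is, so $(\tau_h)_*$ is not obviously wea-guely continuous. I would handle this by extracting a further subsequence along which $(\tau_h)_*\mu_{\eps_n}$ is itself wea-guely convergent (possible because its $\mathcal X_0$-marginal is again $\mathbf P$, using $\tau$-invariance of $\mathbf P$) and then using the total-variation bound above to force its wea-gue limit to coincide with $\mu$. With $(\tau_h)_*\mu=\mu$ in hand, I would test wea-gue convergence against products $f(\xi)V(a)$ with $f\in C_b(\mathcal X_0)$ and $V\in C_c(\R^d)$ and let $V\uparrow 1$: this shows that the $\mathcal X_0$-marginal $\bar\mu$ of $\mu$ is a $\tau$-invariant sub-probability dominated by $\mathbf P$. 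Ergodicity of $\mathbf P$ then forces $\bar\mu=c\,\mathbf P$ for some $c\in(0,1]$, with $c>0$ since $\mu\neq 0$, so $c^{-1}\mu$, read through $\mathcal X\cong \mathcal X_0\otimes \R^d$, is a stationary probability measure in $\Mcal_{\mathrm s}(\mathcal X)$ whose increment distribution is $\mathbf P$, as required.
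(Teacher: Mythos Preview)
Your overall strategy is exactly the paper's: average $\mathbf P\otimes\delta_0$ against $\eps e^{-\eps t}\,dt$, extract a wea-gue subsequential limit $\mu$ on $\mathcal X_0\otimes\R^d$ using tightness of the fixed $\mathcal X_0$-marginal, show $\mu\neq 0$ when the dichotomy fails, prove $\tau$-invariance of $\mu$, and then use ergodicity of $\mathbf P$ to force the $\mathcal X_0$-marginal of $\mu$ to be $c\,\mathbf P$. You also correctly isolate the one genuine difficulty, namely passing the shift through the wea-gue limit.

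Your proposed treatment of that step, however, does not close. Extracting a further subsequence along which $(\tau_h)_*\mu_{\eps_n}$ converges wea-guely to some $\mu'$, and using the total-variation estimate, gives only $\mu'=\mu$; it does \emph{not} identify $\mu'$ with $(\tau_h)_*\mu$. For the latter you would need $\int F\circ\tau_h\,d\mu_{\eps_n}\to\int F\circ\tau_h\,d\mu$ for wea-gue test functions $F$, and that is precisely the wea-gue continuity of $(\tau_h)_*$ you already flagged as unavailable, since $F\circ\tau_h(\xi,a)=F(\tau_h\xi,a+\xi(h))$ need not vanish uniformly as $a\to\infty$. The paper fills this gap with a cutoff: choose $g\in C_c(\R^d)$, $0\le g\le 1$, and note that $g(\xi(h))\,F(\tau_h\xi,a+\xi(h))$ \emph{is} a legitimate wea-gue test function (on its support $\xi(h)$ lies in a fixed compact set, so $a+\xi(h)\to\infty$ forces $a\to\infty$). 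The remainder is bounded by $\|F\|_\infty\int(1-g(\xi(h)))\,d\mathbf P$, uniformly in $n$ and also for the limit $\mu$ (whose $\mathcal X_0$-marginal is dominated by $\mathbf P$, by your own product-test argument). Letting $g\uparrow 1$ yields $(\tau_h)_*\mu_{\eps_n}\to(\tau_h)_*\mu$ wea-guely, and now your total-variation bound does give $(\tau_h)_*\mu=\mu$.
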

\begin{proof}
Note that we can assume $\mathbf P(\omega(0)=0)=1$ and consider $\mathbf P$ as a measure on $\mathcal X_0\times \R^d$. Since $\{\tau_t\}$ acts on the space of finite measures on $\mathcal X_0\otimes \R^d$, 
we define the average 
$$
{\mathbf P}_\eps= \eps \int_0^\infty \e^{-\eps t} \, \tau_t \mathbf P \,\, \d t.
$$
Since $\mathbf P$ has stationary increments, for any $\eps>0$ and $A\subset \mathcal X_0$, we have $\int_0^\infty \eps \, \e^{-\eps t }\,\, \mathbf P(\tau_t\omega\in \in A) \,\d t = \int_0^\infty \eps \,\e^{-\eps t}\, \mathbf P(A)\,\,\d t=\mathbf P(A)$, and the projection of $\mathbf P_\eps$ on $\mathcal X_0$ is still $\mathbf P$. Then there is a subsequence $\mathbf P_n:=\mathbf P_{\eps_n}$ which converges wea-guely to a finite measure $\mathbb Q$ on $\mathcal X_0\otimes \R^d$. 

Let us now assume that \eqref{eq1-thm3} is not true. This implies that for some continuous function $V\geq 0$ with compact support, 
$$
\limsup_{\eps\to 0} \int V(a) \d\mathbf P_\eps>0.
$$
Since $\mathbf P_n \to \mathbb Q$ wea-guely, then $\int V(a) \mathbb Q>0$, forcing 
$$
\mathbb Q(\mathcal X_0\otimes \R^d)=m>0.
$$
We want to assert that, $\tau_t \mathbb Q= \mathbb Q$ for all $t$. This will imply that $\mathbb Q\in \Mcal_{\mathrm{s}}(\mathcal X_0\otimes \R^d)$ is stationary, and its projection of $\mathcal X_0$ will have stationary increments which will be dominated by $\mathbf P$. But the ergodicity of $\mathbf P$ implies that $\mathbb Q= m \mathbf P$, and $\mathbb Q/m$ is a stationary process with its increments distributed as $\mathbf P$, which will prove the claim. 

To check $\tau_t \mathbb Q= \mathbb Q$, we will verify that 
$$
\int F\big(\tau_t \xi, a+ \xi(t)\big) \d\mathbb Q= \int F(\xi,a)\d\mathbb Q.
$$
By construction, we note that $\|\mathbf P_\eps- \tau_t\mathbf P_\eps\|\to 0$ as $\eps\to 0$. Indeed, for any continuous and bounded function $F$ on $\mathcal X_0\otimes \R^d$, 
$$
\begin{aligned}
&\bigg| \int_0^\infty F\big(\tau_s(\xi,a)\big) \,\,\eps \e^{-\eps s}\,\, \d s - \int_0^\infty F\big(\tau_{s+t}(\xi,a)\big) \,\,\eps \e^{-\eps s}\,\, \d s \bigg| \\
&\bigg| \int_0^t F\big(\tau_s(\xi,a)\big) \,\,\eps \e^{-\eps s}\,\, \d s - \int_t^\infty F\big(\tau_{s}(\xi,a)\big) \,\,\big[\eps \e^{-\eps (s-t)}- \eps \e^{-\eps s}\,\, \d s \bigg|  \\
&\leq \eps t \|F\|_\infty+ \|F\|_\infty [1-\e^{-\eps t} ],
\end{aligned}
$$
implying that $\|\mathbf P_\eps- \tau_t\mathbf P_\eps\|\to 0$. Now, since $\mathbf P_n=\mathbf P_{\eps_n}$ converges wea-guely to $\mathbb Q$, it suffices to show that $\tau_t \mathbf P_n \to \tau_t \mathbb Q$ wea-guely, or
$$
\int F\big(\tau_t \xi, a+ \xi(t)\big) \d\mathbf P_n \to F\big(\tau_t \xi, a+ \xi(t)\big) \d\mathbb Q.
$$
Note that by construction, the distribution of $\xi(t)$ is that of the increment $\omega(t)-\omega(0)$. Hence, given $\eps>0$, there is a function $g=g_\eps:\R^d\to [0,1]$ with compact support, 
such that 
$$
\int \big(1-g(\xi(t))\big) \d\mathbf P_n <\eps \qquad \mbox{and}\,\,\int \big(1-g(\xi(t))\big) \d\mathbb Q <\eps.
$$
If we now replace $F\big(\tau_t \xi, a+ \xi(t)\big)$ by $\big(1-g(\xi(t))\big) F\big(\tau_t \xi, a+ \xi(t)\big)$, since $\xi(t)$ is restricted to a compact set in $\R^d$, 
$$
\lim_{a\to\infty} \sup_{\xi\in \mathcal X_0} F(\xi,a)=0. 
$$
and we are done. 

\end{proof}

Now we will need a technical fact which enables us  to derive Theorem \ref{thm3} for the interaction $V$ replaced by the singular Coulomb potential in $\R^3$.

\begin{lemma}\label{lemma-Coulomb}
For any $\eta>0$, let 
\begin{equation}\label{Y_eps}
\begin{aligned}
&V(x)=\frac 1 {|x|} \qquad \,\,\, V_\eta(x)= \frac 1 {(\eta^2+|x|^2)^{1/2}} \quad \mbox{and}\\
&Y_\eta(x)= (V-V_\eta)(x)
\end{aligned}
\end{equation}
Then, 
$$
\lim_{\eta\to 0} \,\,\sup_{\mathbb Q\in \Mcal_{\mathrm{si}}} \,\,\limsup_{\eps\to 0} \,\,\eps\,\,\E^{\mathbb Q} \bigg[\int_0^\infty \d t \,\, \e^{-\eps t}\,\,Y_\eta\big(\omega(t)- \omega(0)\big)\bigg]=0.
$$
where the supremum above is taken over all $\mathbb Q\in \Mcal_{\mathrm{si}}$ such that $H(\mathbb Q |\P)\leq C$ for some $C<\infty$.
\end{lemma}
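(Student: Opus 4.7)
My approach is to split $Y_\eta$ into a uniformly small far-field piece and a singular piece supported near the origin, and to handle the latter using the Gaussian representation of the Coulomb potential together with the entropy inequality. A direct computation rewriting $V-V_\eta$ over a common denominator gives
\[
Y_\eta(x)=\frac{\eta^2}{|x|\sqrt{\eta^2+|x|^2}\bigl(\sqrt{\eta^2+|x|^2}+|x|\bigr)}\le \frac{\eta^2}{2|x|^3},\qquad x\neq 0,
\]
which, combined with the trivial bound $Y_\eta\le V$, yields, for any $\delta>0$,
\[
0\le Y_\eta(x)\le \frac{\eta^2}{2\delta^3}+V(x)\,\mathbf{1}_{\{|x|\le \delta\}}.
\]
Since $\eps\int_0^\infty e^{-\eps t}\,dt=1$, the first summand contributes at most $\eta^2/(2\delta^3)$ to the quantity of interest, uniformly in $\mathbb Q$ and $\eps$. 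Choosing $\delta=\delta(\eta)\to 0$ with $\eta^2/\delta^3\to 0$ (for instance $\delta=\eta^{1/2}$) makes this contribution vanish as $\eta\to 0$, and it remains only to show that, for each fixed small $\delta>0$, the time-averaged contribution of $V\,\mathbf{1}_{\{|x|\le\delta\}}$ can be made small (uniformly in $\mathbb Q$ with $H(\mathbb Q|\P)\le C$ and as $\eps\to 0$) by taking $\delta\to 0$.

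For this, I would invoke the Gaussian representation $V(x)=c_0\int_0^\infty e^{-u^2|x|^2/2}\,du$ from \eqref{eq-Coulomb-Gaussian} in order to convert $V$ into an integral of bounded $(0,1]$-valued functionals $e^{-u^2|x|^2/2}$, to which the Donsker--Varadhan entropy inequality \emph{can} be applied directly. Via Fubini, the task reduces to estimating
\[
K_\eps(u,\mathbb Q)=\eps\int_0^\infty e^{-\eps t}\,\E^{\mathbb Q}\bigl[e^{-u^2|\omega(t)-\omega(0)|^2/2}\bigr]\,dt,\qquad u>0.
\]
For each fixed $t>0$, the entropy inequality combined with the superadditivity bound $h_{\mathcal F_{[0,t]}}(\mathbb Q|\P)\le tH(\mathbb Q|\P)\le tC$ (a consequence of the independent-increments structure of $\P$ together with stationary increments of $\mathbb Q$, in the spirit of Lemma \ref{lemma-entropy}), and the exact $\P$-moment $\E^{\P}[e^{-u^2|W_t|^2/2}]=(1+u^2t)^{-3/2}$, should yield bounds on $K_\eps(u,\mathbb Q)$ uniform in $\mathbb Q$ and decaying fast enough in $u$ to mimic the pure-$\P$ computation $\int_0^\infty K_\eps^{\P}(u)\,du=\mathcal O(\sqrt{\eps})$. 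Dominated convergence in $\delta\to 0$ (equivalently, in the weight $(1-e^{-u^2\eta^2/2})$ as $\eta\to 0$) then closes the argument.

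The main obstacle is precisely the Coulomb singularity of $V$: the exponential moments $\E^{\P}[e^{\lambda V(W_t)}]$ are infinite for every $\lambda>0$, so the entropy inequality cannot be applied directly to $V$ or to any truncation $V\wedge M$ while retaining useful quantitative estimates. The Gaussian representation circumvents this by trading the singular functional for an integral of bounded ones, at the cost of introducing a new $u$-integration whose tail must be controlled uniformly over sublevel sets of $H(\cdot|\P)$; this $u$-tail control, built from the Gaussian Laplace transform of $|W_t|^2$ and the entropy inequality acting on bounded functionals, is the technical heart of the argument and is what makes the averaging factor $\eps$ genuinely useful in the $\eps\to 0$ limit.
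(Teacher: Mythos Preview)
Your decomposition and the Gaussian representation are reasonable starting points, but the heart of your argument --- applying the entropy inequality pointwise in $t$ to the bounded functionals $f_t=e^{-u^2|\omega(t)-\omega(0)|^2/2}$ --- does not deliver the decay in $u$ that you claim. The entropy inequality gives at best
\[
\E^{\mathbb Q}[f_t]\le \frac{tH(\mathbb Q|\P)+\log \E^\P[e^{\beta f_t}]}{\beta}\le \frac{tC+\log\bigl(1+(e^\beta-1)(1+u^2t)^{-3/2}\bigr)}{\beta},
\]
and optimizing over $\beta$ (essentially $\beta\approx\tfrac32\log(1+u^2t)$) yields only $\E^{\mathbb Q}[f_t]\lesssim tC/\log(1+u^2t)$, which is useless once $t$ is large. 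After the $\eps$--average (typical $t\sim 1/\eps$) there is no residual decay in $u$, so you cannot dominate $K_\eps(u,\mathbb Q)$ by anything integrable in $u$ uniformly over $\{H\le C\}$. In fact your target ``mimic the pure--$\P$ computation $\int K_\eps^{\P}(u)\,du=\mathcal O(\sqrt\eps)$'' is impossible for general $\mathbb Q$: if $\mathbb Q$ is the increment law of any genuinely stationary diffusion (e.g.\ the Pekar process) then $\int_0^\infty K_\eps(u,\mathbb Q)\,du=c_0^{-1}\,\eps\,\E^{\mathbb Q}\!\int_0^\infty e^{-\eps t}|\omega(t)-\omega(0)|^{-1}dt$ converges to $\iint|x-y|^{-1}\mu(dx)\mu(dy)>0$, not to $0$. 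Thus the dominated--convergence step you invoke has no dominating function. (As a side remark, the inequality $h_{\mathcal F_{[0,t]}}(\mathbb Q|\P)\le tH(\mathbb Q|\P)$ is in the wrong direction; subadditivity gives $\ge$. What is true, and what the paper uses, is the \emph{conditional} version $\E^{\mathbb Q}[\Psi]\le\beta^{-1}\bigl((b-a)H(\mathbb Q|\P)+\log\E^\P e^{\beta\Psi}\bigr)$, obtained by first conditioning on the infinite past and using $H_t=tH$.)

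The paper avoids this obstacle by applying the entropy inequality not to a bounded functional but to the \emph{time--integrated} quantity $\Psi_n=\int_n^{n+1}Y_\eta(\omega(t)-\omega(0))\,dt$, and then controlling $\E^\P[e^{\beta\Psi_n}]$ via Khas'minskii's lemma. The key pointwise estimate enabling this is $Y_\eta(x)\le C\sqrt\eta\,|x|^{-3/2}$ (compare your $\eta^2/|x|^3$ bound, which is too singular): since $|x|^{-3/2}$ is locally integrable in $\R^3$, one has $\sup_x\E^{\P_x}\int_0^1|\omega(s)|^{-3/2}ds<\infty$, so for $\eta$ small Khas'minskii gives $\sup_n\log\E^\P[e^{\beta\Psi_n}]\to 0$ as $\eta\to 0$. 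Feeding this back into the entropy inequality and summing $\eps\sum_n e^{-\eps n}$ produces the required uniform smallness after $\eps\to 0$, $\beta\to\infty$, $\eta\to 0$. The essential idea you are missing is precisely this Khas'minskii step: exponential integrability under $\P$ of time--integrals of a \emph{sub--Coulombic} singularity $|x|^{-3/2}$.
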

\begin{proof}
We set 
$$
\begin{aligned}
\Psi(\omega)= \int_0^\infty \d t \,\, \e^{-\eps t}\,\,Y_\eta\big(\omega(t)- \omega(0)\big) &=\sum_{n=0}^\infty \int_n^{n+1} \d t \,\, \e^{-\eps t}\,\,Y_\eta\big(\omega(t)- \omega(0)\big) \\
&\leq \sum_{n=0}^\infty \e^{-\eps n} \Psi_n(\omega).
\end{aligned}
$$
where $\Psi_n(\omega)= \int_n^{n+1} \d t \,\, \,\,Y_\eta\big(\omega(t)- \omega(0)\big)$, which is $\mathcal F_{[n,n+1]}$ measurable. 
We now invoke the relative entropy estimate which asserts that for any  function $\Psi: \mathcal Y\to \R$ which is bounded and $\mathcal F_{[a,b]}$-measurable, for any $\beta>0$, 
\begin{equation}\label{eq-entropy-bound}
\E^{\mathbb Q} [ \Psi ] \leq \frac 1 \beta \bigg[(b-a)  {H(\mathbb Q| \P)} + \log \,\E^\P\big[\e^{\beta\Psi}\big]\bigg],
\end{equation}
Therefore, 
$$
\eps \E^{\mathbb Q}[\Psi] \leq \eps \sum_{n=0}^\infty \e^{-\eps n} \bigg[ \frac 1 \beta H(\mathbb Q|\P)+ \frac 1 \beta \log \E^\P\big[\e^{\beta\Psi_n}\big]\bigg]
$$
By our assumption $H(\mathbb Q|\P)\leq C$, if we now send $\eps\to 0$, followed by $\beta\to\infty$, it remains to show that, for any $\beta>0$,
\begin{equation}\label{eq-superexp}
\lim_{\eta\to 0} \sup_n  \log \E^\P\big[ \e^{\beta \Psi_n}\big] =0.
\end{equation}
Note that 
$$
Y_\eta(x)= \eta^{-1} \,\varphi(\frac x\eta) \quad\mbox{where}\,\, \varphi(x)= \frac 1 {|x|}\,\frac 1{\sqrt{1+|x|^2}} \,\, \frac 1 {(|x|+ \sqrt{1+|x|^2}}.
$$
In particular, for some constant $C<\infty$, 
\begin{equation}\label{eq-Y}
Y_\eta(x)\leq \frac{C\sqrt\eta}{|x|^{3/2}}.
\end{equation}
It is well-known that, for any function $\widetilde V\geq 0$  and Markov process $\{\P_x\}$, if
\begin{equation}\label{check1}
\sup_{x\in \R^d}\E^{\P_x}\bigg[\int_0^1 \d s \,\, \widetilde V(\omega(s)) \bigg] \leq \theta <1,
\end{equation}
then 
$$
\sup_{x\in \R^d}\E^{\P_x}\bigg[\exp\bigg\{\int_0^1 \d s \,\, \widetilde V(\omega(s))\bigg\} \bigg] \leq \frac\theta{1-\theta}<\infty,
$$
Thus, in view of the above fact, if we now apply \eqref{eq-Y}, then the desired estimate \eqref{eq-superexp} follows once we show \eqref{check1}
with $\P_x$ denoting the law of three dimensional Brownian motion starting at $x\in\R^3$. If we choose $\eta>0$ small enough, then \eqref{check1} follows once we show that
$$
\begin{aligned}
\sup_{x\in \R^3} \E^{\P_x}\bigg\{\int_0^1 \frac{\d s}{|\omega_s|^\frac{3}{2}}\bigg\}
=\sup_{x\in \R^3} \d y\int_0^1 \d \sigma\int_{\R^3}\frac{1}{|y|^\frac{3}{2}} \frac{1}{(2\pi \sigma)^\frac{3}{2} }\exp\bigg\{-\frac{(y-x)^2}{2\sigma}\bigg\}
<\infty.
\end{aligned}
$$
One can see that  
\begin{equation}\label{eq-sup-x}
\sup_{x\in \R^3} \int_{\R^3}\d y\frac{1}{|y|^\frac{3}{2}} \frac{1}{(2\pi \sigma)^\frac{3}{2} }\exp\bigg\{-\frac{(y-x)^2}{2\sigma}\bigg\}
\end{equation}
is attained at $x=0$ because we  can rewrite the integral as 
$$ c\int_{\R^3} \exp\bigg\{-\frac{\sigma|y|^2}{2}+i\langle x,y\rangle\bigg\}\frac{1}{|y|^\frac{3}{2}}\d y,
$$ 
where $c>0$ is a constant. When  $x=0$ the integral reduces to $\int_0^1 \sigma^{-3/4} \,\,\d \sigma$ which is finite.
\end{proof}

Then we combine Theorem \ref{thm3} and Lemma \ref{lemma-Coulomb} to deduce

\begin{cor}\label{cor-thm3}
Let ${\mathbb Q} \in \Mcal_{\mathrm{si,e}}(\mathcal X)$ be an almost surely continuous ergodic process with stationary increments with values in $\R^3$ such that $H({\mathbb Q} | \P)<\infty$. 
Then either, 
\begin{equation}\label{eq1-thm3}
\lim_{\eps\to 0} \eps \,\, \E^{\mathbb Q} \bigg[\int_0^\infty  \e^{-\eps t}\,\,  \frac 1 {|\omega(t)-\omega(0)|} \,\ \d t\bigg]=0,
\end{equation}
or there is a stationary process $\mathbb Q^\prime\in \Mcal_{\mathrm{s}}(\mathcal X)$ with ${\mathbb Q}$ being the distribution of its increments. 
\end{cor}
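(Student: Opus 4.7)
The plan is to argue by contrapositive: assume the first alternative fails, so that along some sequence $\eps_n \downarrow 0$ one has
$$
\eps_n \, \E^{\mathbb Q}\bigg[\int_0^\infty e^{-\eps_n t}\frac{dt}{|\omega(t)-\omega(0)|}\bigg] \geq c > 0,
$$
and I want to use Theorem \ref{thm3} to produce the required stationary extension $\mathbb Q'$. Since Theorem \ref{thm3} only applies to continuous $V$ of compact support, the work will lie in approximating the Coulomb potential $V(x)=1/|x|$ by such functions while retaining a nontrivial lower bound on the resolvent average.

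First I would split $V = V_\eta + Y_\eta$ as in \eqref{Y_eps}, so that $V_\eta$ is bounded and continuous (but not compactly supported), while $Y_\eta$ absorbs the local singularity of $V$. Since $H(\mathbb Q|\P) < \infty$, Lemma \ref{lemma-Coulomb} applied with $C = H(\mathbb Q|\P)$ yields a quantity $\delta(\eta) \to 0$ as $\eta \to 0$ such that
$$
\limsup_{\eps\to 0} \eps \, \E^{\mathbb Q}\bigg[\int_0^\infty e^{-\eps t} Y_\eta\big(\omega(t)-\omega(0)\big)\, dt\bigg] \leq \delta(\eta).
$$
Choosing $\eta$ so small that $\delta(\eta) < c/3$, the bounded continuous $V_\eta$ must then satisfy
$$
\limsup_{\eps\to 0} \eps\, \E^{\mathbb Q}\bigg[\int_0^\infty e^{-\eps t} V_\eta\big(\omega(t)-\omega(0)\big)\, dt\bigg] \geq \frac{2c}{3}.
$$

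Next I would truncate $V_\eta$ spatially: take a continuous cutoff $\chi_R : \R^3 \to [0,1]$ with $\chi_R \equiv 1$ on $\{|x|\leq R\}$ and $\chi_R \equiv 0$ on $\{|x|\geq R+1\}$. The pointwise bound $V_\eta(x) \leq 1/|x|$ gives $V_\eta(x)(1-\chi_R(x)) \leq 1/R$, and since $\eps\int_0^\infty e^{-\eps t}\, dt = 1$, the tail contributes at most $1/R$. Choosing $R > 3/c$ leaves
$$
\limsup_{\eps\to 0}\eps \, \E^{\mathbb Q}\bigg[\int_0^\infty e^{-\eps t} (V_\eta \chi_R)\big(\omega(t)-\omega(0)\big)\, dt\bigg] \geq \frac{c}{3} > 0.
$$
Since $V_\eta \chi_R$ is continuous with compact support, Theorem \ref{thm3} applied with $\mathbf P = \mathbb Q$ and $V = V_\eta \chi_R$ rules out its first alternative, so its second alternative produces a stationary $\mathbb Q' \in \Mcal_{\mathrm s}(\mathcal X)$ whose increment distribution is $\mathbb Q$.

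The only delicate step is the $\eps$-uniform control of the singular part of $1/|x|$ under the finite-entropy constraint, which is precisely what Lemma \ref{lemma-Coulomb} supplies via the entropy inequality \eqref{eq-entropy-bound}; once that is in hand, the rest is the routine two-step approximation of $1/|x|$ by continuous compactly supported potentials sketched above.
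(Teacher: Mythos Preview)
Your proof is correct and is precisely the detailed execution of what the paper indicates by ``combine Theorem~\ref{thm3} and Lemma~\ref{lemma-Coulomb}'': use Lemma~\ref{lemma-Coulomb} to remove the singularity of $1/|x|$ at the origin, then a spatial cutoff to handle the tail of $V_\eta$ (which the paper does not spell out but is needed), leaving a continuous compactly supported $V$ to which Theorem~\ref{thm3} applies. There is no genuine difference in approach.
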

\qed

\section{Proof of Theorem \ref{thm1} and Theorem \ref{thm2}}\label{sec-pf-thm1-thm2}

\subsection{Proof of Theorem \ref{thm1}.}

To prove Theorem \ref{thm1}, we will first provide a strong large deviation principle for the distribution of the empirical process $R_T$ of increments under $\P$. 
For any $A\subset \mathcal X$, we denote by 
\begin{equation}\label{Rdef}
R_T(A,\omega)=\frac 1 T\int_0^T \1_A\big\{\tau_s(\omega_T(\cdot)- \omega_T(0)\big\} \,\, \d s,
\end{equation}
which is the empirical process of Brownian increments. Here $\omega_T$ is the ``periodization" defined by
$$
\omega_T(s)= 
\begin{cases}
\omega(s) \qquad\qquad\qquad\mbox{ if } 0\leq s \leq T,
\\
n \omega(T)+ \omega(r)\qquad \,\mbox{if} \,\,s=nT+r \quad r,n\in\N, 0\leq r<T.
\end{cases}
$$
Note that for any $\omega\in \mathcal X$, 
$$
R_T(\omega,\cdot)\in \Mcal_{\mathrm{si}}
$$
 and thus we have an induced probability measure $\mathbb Q_T= \P \, R_T^{-1}$ on $\Mcal_{\mathrm{si}}$ via the above map $\mathcal X\to\Mcal_{\mathrm{si}}$.
 \begin{lemma}\label{lemma-1-pf-thm1}
The family $\mathbb Q_{T}= \P \,R_T^{-1}$  satisfies a strong large deviation principle as $T\to\infty$
in the space of probability measures on $\Mcal_{\mathrm{si}}$ with rate function $H(\cdot|\P)$ with compact level sets. In over words, 
\begin{equation}\label{eq-ldp-statement}
\begin{aligned}
\liminf_{T\to\infty}\frac 1 T \log \mathbb Q_T(G) \geq -\inf_{\mathbb Q\in G} H(\mathbb Q|\P) \qquad \forall\,\,\,G\subset \Mcal_{\mathrm{si}}\,\,\,\mbox{open}\\
\limsup_{T\to\infty}\frac 1 T \log \mathbb Q_T(F) \leq -\inf_{\mathbb Q\in F} H(\mathbb Q|\P) \qquad \forall\,\,\,F\subset \Mcal_{\mathrm{si}}\,\,\,\mbox{closed}
\end{aligned}
\end{equation}
\end{lemma}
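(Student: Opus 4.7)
The strategy is to establish the weak LDP using the Donsker--Varadhan framework and then upgrade it to the strong version in \eqref{eq-ldp-statement} by exhibiting exponential tightness, which is available because Brownian increments over disjoint time intervals are independent.

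First I would verify that $R_T$ really takes values in $\Mcal_{\mathrm{si}}$: the periodization $\omega_T$ is constructed precisely so that its increment field is exactly shift-invariant in law under the averaging $\frac{1}{T}\int_0^T \tau_s \,\d s$, so $R_T(\omega,\cdot)\in\Mcal_{\mathrm{si}}$ almost surely. Then the classical process-level LDP of Donsker--Varadhan (Part IV, \cite{DV75}), adapted from the stationary to the stationary-increment setting, yields the weak LDP
\begin{align*}
\limsup_{T\to\infty}\frac{1}{T}\log\mathbb{Q}_T(K)&\leq -\inf_{\mathbb{Q}\in K}H(\mathbb{Q}|\P)\quad\text{for all compact }K\subset\Mcal_{\mathrm{si}},\\
\liminf_{T\to\infty}\frac{1}{T}\log\mathbb{Q}_T(G)&\geq -\inf_{\mathbb{Q}\in G}H(\mathbb{Q}|\P)\quad\text{for all open }G\subset\Mcal_{\mathrm{si}},
\end{align*}
with the rate function identified as the specific relative entropy $H(\cdot|\P)$ via Lemma \ref{lemma-entropy} and the usual subadditivity and variational arguments.

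Next I would establish exponential tightness: for every $a<\infty$ there exists a compact $K_a\subset\Mcal_{\mathrm{si}}$ with
$$
\limsup_{T\to\infty}\frac{1}{T}\log \P\bigl(R_T\notin K_a\bigr)\leq -a.
$$
Compactness in $\Mcal_{\mathrm{si}}$ is governed by uniform tightness of the finite-dimensional distributions of the increment field. To build $K_a$ I would pick a countable dense family of pairs $(s_j,t_j)$ and control $\int\varphi_j(\omega(t_j)-\omega(s_j))\,\d R_T$ for suitable growth functions $\varphi_j$. Writing such a functional as an empirical average over $[0,T]$ and splitting $[0,T]$ into unit intervals on which the Brownian increments are i.i.d., a Markov--Chernoff bound together with the Gaussian moment estimate $\E^{\P}[\exp(\lambda|\omega(t)-\omega(s)|^2)]<\infty$ (for $|t-s|\le 1$ and $\lambda$ small) delivers the required $e^{-Ta}$ decay for the probability of violating any single such constraint. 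A diagonal choice over $j$ then produces $K_a$.

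Combining the weak upper bound with exponential tightness in the standard way yields the upper bound for every closed $F\subset\Mcal_{\mathrm{si}}$: cover $F\subset (F\cap K_a)\cup K_a^c$, apply the compact-set upper bound to the first piece and exponential tightness to the second, and let $a\to\infty$. The coercivity of $H(\cdot|\P)$ recorded in Lemma \ref{lemma-entropy} ensures that the level sets $\{H(\cdot|\P)\leq a\}$ are compact, so the principle obtained is strong in the sense stated. The main obstacle I expect is the construction of the compact sets $K_a$: one must choose the test functionals $\varphi_j$ with enough uniformity across the dense family $(s_j,t_j)$ so that a single event of small exponential probability captures the failure of joint tightness of all marginals, and here the independence of Brownian increments on disjoint unit intervals is the essential structural input.
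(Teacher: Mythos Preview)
Your overall architecture matches the paper's exactly: invoke Donsker--Varadhan (Part IV, \cite{DV75}) for the weak LDP (lower bound on open sets, upper bound on compact sets) and then upgrade via exponential tightness. The only substantive content in the paper's proof is the exponential tightness step, and here your route diverges.

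The paper does not work through finite-dimensional marginals. Instead it controls the H\"older seminorm on a unit interval,
\[
\|\omega_i^\star\|=\sup_{0\le s\le t\le 1}\frac{|\omega_i(t)-\omega_i(s)|}{|t-s|^{1/4}},
\]
uses Borell's Gaussian concentration inequality to get $\E^{\P}\!\bigl[\e^{\|\omega_i^\star\|}\bigr]<\infty$, and from this derives
\[
\limsup_{T\to\infty}\frac{1}{T}\log \E^{\mathbb Q_T}\!\Bigl[\e^{\,T\sum_{i=1}^3\|\omega_i^\star\|}\Bigr]<\infty,
\]
which immediately yields the required compact sets $K_\ell$ via H\"older balls (compact in $\mathcal Y$ by Arzel\`a--Ascoli).

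Your proposal contains a gap precisely at the sentence ``Compactness in $\Mcal_{\mathrm{si}}$ is governed by uniform tightness of the finite-dimensional distributions of the increment field.'' This is false as stated: $\mathcal Y$ is a space of \emph{continuous} trajectories, so Prokhorov tightness of measures on $\mathcal Y$ requires, beyond tightness of finite-dimensional marginals, a uniform modulus-of-continuity control. Bounding averages $\int\varphi_j(\omega(t_j)-\omega(s_j))\,\d R_T$ for a countable family of pairs $(s_j,t_j)$ gives moment bounds on increments but does not by itself produce equicontinuity. Your scheme can be salvaged by feeding those moment bounds into a Kolmogorov--Chentsov argument (choosing $\varphi_j(x)=|x|^p$ and pairs on a dyadic grid, then summing), but that extra step is exactly what the paper's direct H\"older-norm approach replaces: Borell's inequality gives the path-level regularity in one stroke, and the i.i.d. structure over unit blocks converts it into the exponential moment bound for $\mathbb Q_T$.
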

\begin{proof}
The proof of the lower bound for all open sets $G\subset \Mcal_{\mathrm{si}}$ and the upper bound for all compact sets $K\subset \Mcal_{\mathrm{si}}$ in \eqref{eq-ldp-statement} 
follows directly from the arguments of (Part IV, \cite{DV75}) modulo slight changes, and the details are omitted.
To strengthen the upper bound to all closed sets $C\subset \Mcal_{\mathrm{si}}$, 
it suffices to show exponential tightness for the distributions $\mathbb Q_T$ that requires that for any $\ell>0$, existence of a compact set $K_\ell\subset\Mcal_{\mathrm{si}}$
so that, 
\begin{equation}\label{eq-1-pf-thm1}
\limsup_{T\to\infty} \frac 1 T\log \mathbb Q_T[K_\ell^{\mathrm c}] \leq -\ell.
\end{equation}
To prove the above claim, for $i=1,2,3$, if we set
$$
\|\omega_i^\star\|= \sup_{0\leq s \leq t\leq 1} \frac{|\omega_i(s)-\omega_i(t)|}{|s-t|^{1/4}}, 
$$
then by Borell's inequality, for  some constants $C_1, C_2>0$, 
$$
\P\big[\|\omega_i^\star\| > \lambda\big] \leq C_1 \exp\bigg[-\frac {\lambda^2}{2C_2}\bigg],
$$
and consequently, $\E^{\P}[\e^{\|\omega_i^\star\|}]<\infty$. Then 
$$
\limsup_{T\to\infty} \frac 1 T \log \E^{\mathbb Q_T}\big[\e^{T \sum_{i=1}^3 \|\omega_i^\star\|}\big] <\infty,
$$ 
and the desired exponential tightness \eqref{eq-1-pf-thm1} follows readily, proving the requisite upper bound in \eqref{eq-ldp-statement} for all closed sets. 
\end{proof}

We will now derive 
\begin{lemma}\label{lemma-2-pf-thm1}
Fix any $\eps>0$. Then the distributions 
$$
\widehat{\mathbb Q}_{\eps,T}=\widehat\P_{\eps,T} \,R_T^{-1}
$$
 of the empirical process of increments under the Polaron measure satisfies a strong large deviation principle in the space of probability measures on $\widetilde\Mcal$ with rate function 
$$
J_\eps(\mathbb Q)= g(\eps)- \bigg[\int F_\eps \mathbb \d \mathbb Q- H(\mathbb Q|\P)\bigg]
$$
and 
\begin{equation}\label{def-F}
F_\eps(\omega)=  \eps \int_0^\infty \frac{\eps \e^{-\eps r}\,\d r}{|\omega(r)-\omega(0)|}.
\end{equation}
\end{lemma}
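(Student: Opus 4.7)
The plan is to derive the LDP for $\widehat{\mathbb Q}_{\eps,T}$ by tilting the strong LDP of Lemma \ref{lemma-1-pf-thm1} via a Varadhan-type argument, after recasting the Polaron exponent as a linear functional of the empirical process $R_T$.

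The first step will be to rewrite the exponent in \eqref{eq-Polaron-1} in terms of $R_T$. Symmetrising and substituting $(s,t)\mapsto(u,r)=(s,t-s)$, the double integral becomes
$$\frac{1}{2}\int_{-T}^T\int_{-T}^T \frac{\eps\e^{-\eps|t-s|}}{|\omega(t)-\omega(s)|}\,\d s\,\d t \;=\; \int_{-T}^T \d u \int_0^{T-u} \frac{\eps\e^{-\eps r}}{|\omega(u+r)-\omega(u)|}\,\d r.$$
Passing to the periodization $\omega_T$ and absorbing the boundary discrepancy (which is damped by the exponential factor $\e^{-\eps r}$) into an error that is $o(T)$ uniformly in $\omega$, the right-hand side reduces to $T\int F_\eps\,\d R_T + o(T)$.

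The second step is Varadhan's lemma. Writing
$$\widehat{\mathbb Q}_{\eps,T}(A) \;=\; \frac{1}{Z_{\eps,T}}\,\E^\P\bigl[\1_{\{R_T\in A\}}\,\e^{T\int F_\eps\,\d R_T + o(T)}\bigr]$$
and using $T^{-1}\log Z_{\eps,T}\to g(\eps)$ from Theorem \ref{thm1}, the claimed rate $J_\eps$ is exactly what a tilted form of the LDP in Lemma \ref{lemma-1-pf-thm1} produces. For a bounded continuous functional $\Lambda(\mathbb Q):=\int F_\eps\,\d\mathbb Q$ this would follow from the textbook Varadhan lemma; the main obstacle is that $F_\eps$ carries the Coulomb singularity, so $\Lambda$ is neither bounded nor continuous on $\Mcal_{\mathrm{si}}$.

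I would resolve this by smoothing the potential as in Lemma \ref{lemma-Coulomb}: replace $1/|x|$ by $V_\eta(x)=(\eta^2+|x|^2)^{-1/2}$, obtaining bounded continuous approximants $F_{\eps,\eta}$ and corresponding rate functions $J_{\eps,\eta}$. For each $\eta>0$ Varadhan's lemma applies cleanly and yields a strong LDP for the tilted distributions. The exponential-moment bounds established inside the proof of Lemma \ref{lemma-Coulomb} (uniform-in-$n$ finiteness of $\E^\P[\exp\{\beta\int_n^{n+1}Y_\eta(\omega(t)-\omega(0))\,\d t\}]$ for $\eta$ small) translate, through the entropy inequality \eqref{eq-entropy-bound}, into the super-exponential truncation estimate
$$\lim_{\eta\downarrow 0}\limsup_{T\to\infty}\frac{1}{T}\log \E^\P\Bigl[\1_{\{H(R_T|\P)\leq a\}}\,\e^{T\int(F_\eps-F_{\eps,\eta})\,\d R_T}\Bigr]=0 \qquad\text{for every }a<\infty.$$
Combined with the exponential tightness and the coercivity of $H(\cdot|\P)$ from Lemma \ref{lemma-1-pf-thm1}, together with lower semicontinuity of $\mathbb Q\mapsto\int F_\eps\,\d\mathbb Q$ (Fatou applied to the non-negative integrand), this permits us to pass to the limit $\eta\downarrow 0$ in both the upper bound on closed sets and the lower bound on open sets, yielding the strong LDP for $\widehat{\mathbb Q}_{\eps,T}$ with rate $J_\eps$ as claimed.
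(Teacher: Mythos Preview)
Your overall architecture---truncate the Coulomb potential to $V_\eta$, apply Varadhan's lemma for the bounded continuous tilt, then remove the truncation by a super-exponential estimate---is exactly the route the paper takes. Two steps in your execution, however, do not go through as written.

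\medskip
\noindent\textbf{The boundary error is not $o(T)$ uniformly in $\omega$ before truncation.} In your first step you periodize and declare the discrepancy between the Polaron exponent and $T\int F_\eps\,\d R_T$ to be $o(T)$ uniformly in $\omega$, ``damped by $\e^{-\eps r}$''. For the singular potential $1/|x|$ this is false: the tail piece involves $\int_{T-u}^\infty \e^{-\eps r}\,|\omega_T(u+r)-\omega_T(u)|^{-1}\,\d r$, and the integrand is unbounded in $\omega$. The paper fixes this by carrying out the $R_T$-representation \emph{only for the truncated} $V_\eta$, for which the error is the constant $\e^{\eps/\eta}$ (see \eqref{eq-estimate}). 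You should reorder: truncate first, then periodize.

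\medskip
\noindent\textbf{The removal of the cutoff cannot be routed through the entropy inequality with the indicator $\1_{\{H(R_T|\P)\le a\}}$.} The estimate \eqref{eq-entropy-bound} bounds the \emph{linear} functional $\E^{\mathbb Q}[\Psi]$ for a fixed $\mathbb Q$ with $H(\mathbb Q|\P)\le a$; it does not bound an exponential moment under $\P$. If you split according to $\{H(R_T|\P)\le a\}$, the piece on the complement is
\[
\E^\P\bigl[\1_{\{H(R_T|\P)>a\}}\,\e^{T\int F_\eps\,\d R_T}\bigr],
\]
and you cannot control this without already possessing an exponential moment bound for $F_\eps$ under $\P$---which is precisely the missing input. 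Exponential tightness alone (small probability of $\{H>a\}$) does not help against the unbounded factor $\e^{T\int F_\eps\,\d R_T}$.

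The paper instead proves the needed super-exponential estimate \emph{directly under $\P$}, with no reference to entropy or sublevel sets:
\[
\limsup_{\eta\to 0}\ \limsup_{T\to\infty}\ \frac1T\log\,\E^{\P}\Bigl[\exp\Bigl\{\beta\,\eps\int_{-T}^T\!\!\int_{-T}^T \e^{-\eps|t-s|}\,Y_\eta\bigl(\omega(t)-\omega(s)\bigr)\,\d s\,\d t\Bigr\}\Bigr]=0
\quad\text{for every }\beta>0.
\]
This is obtained by the same Khasminskii-type argument as in the proof of Lemma~\ref{lemma-Coulomb}: the bound $Y_\eta(x)\le C\sqrt\eta\,|x|^{-3/2}$ together with $\sup_x\E^{\P_x}\int_0^1 |\omega_s|^{-3/2}\,\d s<\infty$ makes the per-unit-time exponential moment tend to $1$ as $\eta\to 0$, and the Markov property pieces the unit intervals together. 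You already cite the relevant unit-interval bound; use it \emph{directly} (via H\"older against the $V_\eta$-tilt) rather than detouring through \eqref{eq-entropy-bound}.
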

\begin{proof}
Let us set
 $$
 F_{\eps,\eta}(\omega)= \eps \int_0^\infty \eps \e^{-\eps r}\, V_\eta\big(\omega(r)-\omega(0)\big)\,\d r,
 $$ 
with $V_\eta$ being the truncated Coulomb potential defined in \eqref{Y_eps}. Then by definition of the empirical process $R_T(\omega,\cdot)$, 
 $$
 \begin{aligned}
 T\int F_{\eps,\eta}(\omega^\prime) \, R_T(\omega,\d\omega^\prime)= \int_0^T\d t\bigg[\bigg\{\int_0^{T-t} \eps\e^{-\eps s} \,\,&V_\eta(\omega(t+s)-\omega(t))\,\,\d s\\
 &+ \int_{T-t}^{\infty} \eps\e^{-\eps s} \,\,V_\eta(\omega_T(t+s)-\omega_T(t))\,\,\d s \bigg\}\bigg].
 \end{aligned}
 $$
 If 
 $$
G_{\eps,T,\eta}(\omega)= \exp\bigg\{\int_0^T\int_0^T \eps \e^{-\eps |t-s|}\,\, V_\eta\big(\omega(t)-\omega(s)\big)\bigg\},
 $$ 
 then for any set $A\subset \Mcal_{\mathrm{si}}$, 
\begin{equation}\label{eq-estimate}
\E^{\P}\big[G_{\eps,T,\eta}(\omega)\,\,\1_A\big] \leq \E^{\P}\bigg[\exp\bigg\{ T\int F_\eta(\omega^\prime) \, R_T(\omega,\d\omega^\prime)\bigg\}\bigg]\,\,\e^{\eps/\eta}.
\end{equation}
Now we can combine lower and upper bounds from Lemma \ref{lemma-1-pf-thm1} with Varadhan's lemma applied to
the bounded continuous function $V_\eta$. A argument similar to the proof of Lemma \ref{lemma-Coulomb} implies that, for any $\beta>0$, 
$$
\limsup_{\eta\to 0} \,\limsup_{T\to\infty} \frac 1 T\log \E^{\P}\bigg[\exp\bigg\{\beta \,\, \eps\int_{-T}^T\int_{-T}^T \,\, \d s \d t\,\, \e^{-\eps |t-s|} \,\, Y_\eta\big(\omega(s)-\omega(t)\big)\bigg\}\bigg]=0.
$$
which then concludes the proof of Lemma \ref{lemma-2-pf-thm1}.
\end{proof}

\begin{lemma}\label{lemma-3-pf-thm1}
Fix any $\eps>0$ and let 
$$
\mathfrak m_\eps=\bigg\{\mathbb Q\in \Mcal_{\mathrm{si}}\colon \int F_\eps \d\mathbb Q- H(\mathbb Q| \P)= g(\eps)\bigg\}
$$
denote the set of processes with stationary increments that maximize the variational problem $g(\eps)$ defined in \eqref{eq-1-thm1}. Then, 
$\mathfrak m_\eps\neq\emptyset$. 
\end{lemma}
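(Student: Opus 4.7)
The strategy is a direct variational argument on $\Mcal_{\mathrm{si}}$: take a maximizing sequence, use the relative entropy inequality together with the Khasminskii-type estimate already exploited in Lemma \ref{lemma-Coulomb} to force the sequence into a sub-level set of $H(\cdot|\P)$, then use the coercivity from Lemma \ref{lemma-entropy} to extract a weak limit and verify that it saturates the supremum. The only delicate point is that $F_\eps$ is neither bounded nor continuous on $\mathcal Y$ because of the singular Coulomb kernel, and Lemma \ref{lemma-Coulomb} is exactly what removes this obstruction.

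\textbf{Step 1 (entropy bound).} Choose $(\mathbb Q_n)\subset\Mcal_{\mathrm{si}}$ with $\int F_\eps\,d\mathbb Q_n - H(\mathbb Q_n|\P)\to g(\eps)$. Testing the supremum in \eqref{eq-1-thm1} at $\mathbb Q=\P$ gives $g(\eps)\ge 0$, so $H(\mathbb Q_n|\P)\le \int F_\eps\,d\mathbb Q_n+o(1)$. Split $V=V_\eta+Y_\eta$ as in \eqref{Y_eps}: the truncated part $F_{\eps,\eta}$ is bounded by $1/\eta$, while the singular remainder $\int(F_\eps-F_{\eps,\eta})\,d\mathbb Q$ is controlled, via \eqref{eq-entropy-bound} and the Khasminskii bound $\sup_x \E^{\P_x}[\exp\{\beta\int_0^1 Y_\eta(\omega_s)\,ds\}]<\infty$ used in the proof of Lemma \ref{lemma-Coulomb}, by $\tfrac14 H(\mathbb Q|\P)+K'$ once $\eta,\beta$ are chosen appropriately. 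Combining the two pieces yields
\begin{equation*}
\int F_\eps\,d\mathbb Q\;\le\;\tfrac12\,H(\mathbb Q|\P)+K\qquad\text{for all }\mathbb Q\in\Mcal_{\mathrm{si}},
\end{equation*}
which together with the maximizing property forces $\sup_n H(\mathbb Q_n|\P)\le C<\infty$.

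\textbf{Step 2 (extraction and passage to the limit).} By the coercivity part of Lemma \ref{lemma-entropy}, $\{H(\cdot|\P)\le C\}$ is weakly compact, so along a subsequence $\mathbb Q_n\to\mathbb Q_*\in\Mcal_{\mathrm{si}}$; the lower semicontinuity of $H(\cdot|\P)$ gives $H(\mathbb Q_*|\P)\le\liminf H(\mathbb Q_n|\P)\le C$. For the energy I write $\int F_\eps\,d\mathbb Q_n = \int F_{\eps,\eta}\,d\mathbb Q_n+\int(F_\eps-F_{\eps,\eta})\,d\mathbb Q_n$. The functional $\mathbb Q\mapsto\int F_{\eps,\eta}\,d\mathbb Q$ is weakly continuous because $V_\eta$ is bounded continuous on $\R^3$ and the integration in $r$ is against the probability measure $\eps e^{-\eps r}\,dr$ (dominated convergence in the compact-open topology on $\mathcal Y$). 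The remainder $\int(F_\eps-F_{\eps,\eta})\,d\mathbb Q$ tends to $0$ as $\eta\to 0$ uniformly over the entropy sublevel set $\{H(\cdot|\P)\le C\}$ by Lemma \ref{lemma-Coulomb} (its proof, which relies only on the entropy inequality and the Khasminskii bound, applies verbatim at fixed $\eps>0$). Consequently, sending $n\to\infty$ for fixed $\eta$ and then $\eta\to 0$, one gets $\int F_\eps\,d\mathbb Q_n\to\int F_\eps\,d\mathbb Q_*$.

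\textbf{Conclusion and main difficulty.} Combining the two limit statements,
\begin{equation*}
\int F_\eps\,d\mathbb Q_*-H(\mathbb Q_*|\P)\;\ge\;\lim_n\int F_\eps\,d\mathbb Q_n-\liminf_n H(\mathbb Q_n|\P)\;\ge\;g(\eps),
\end{equation*}
so $\mathbb Q_*\in\mathfrak m_\eps$ and the set is non-empty. The principal obstacle is Step 1: a naive application of the entropy inequality to $F_\eps$ would fail, because $F_\eps$ depends on $\omega(0)$ through the difference $\omega(r)-\omega(0)$ on an unbounded time horizon and $V$ is singular; the decomposition $V=V_\eta+Y_\eta$ plus the uniform exponential-moment bound of Lemma \ref{lemma-Coulomb} is essential to obtain a coefficient strictly smaller than $1$ in front of $H(\mathbb Q|\P)$, which is what closes the loop and yields the uniform entropy bound on the maximizing sequence.
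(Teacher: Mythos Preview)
Your proof is correct. The overall architecture---maximizing sequence, uniform entropy bound, compactness, passage to the limit---matches the paper, but the two arguments diverge in how the key uniform bound $\sup_n H(\mathbb Q_n|\P)<\infty$ is obtained. The paper exploits the variational formula itself at a larger coupling: since $g_C(\eps)=\sup_{\mathbb Q}\{C\int F_\eps\,d\mathbb Q-H(\mathbb Q|\P)\}<\infty$ for some $C>1$ (a consequence of Lemma~\ref{lemma-2-pf-thm1}), every $\mathbb Q$ satisfies $C\int F_\eps\,d\mathbb Q-H(\mathbb Q|\P)\le g_C(\eps)$, and combining this with $\int F_\eps\,d\mathbb Q_n\ge H(\mathbb Q_n|\P)$ (from $g(\eps)\ge 0$) gives $(C-1)H(\mathbb Q_n|\P)\le g_C(\eps)$. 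Your route is more hands-on: you derive the inequality $\int F_\eps\,d\mathbb Q\le \tfrac12 H(\mathbb Q|\P)+K$ directly from the truncation $V=V_\eta+Y_\eta$, the relative entropy inequality~\eqref{eq-entropy-bound}, and the Khasminskii estimate. Both arguments ultimately rest on the same Khasminskii input, but the paper packages it through the finiteness of $g_C(\eps)$, while you unwind it explicitly. Your Step~2 is in fact more complete than the paper's sketch: the paper only cites lower semicontinuity of $H(\cdot|\P)$ to conclude that a limit point is a maximizer, whereas you correctly observe that one must also control the energy $\int F_\eps\,d\mathbb Q$ along the sequence, and you supply this via the uniform approximation of $F_\eps$ by the bounded continuous $F_{\eps,\eta}$ on the entropy sublevel set.
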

\begin{proof}
Since $\P$ has independent increments, if we  first show that for a maximizing sequence $\{\mathbb Q_n\}\subset \Mcal_{\mathrm{si}}$ for $g(\eps)$, 
\begin{equation}\label{entropy-uniform}
\sup_n H(\mathbb Q_n |\P) <\infty, 
\end{equation}
the above uniform bound then will imply that $\{\mathbb Q_n\}$ is tight, which, then combined with the 
lower semicontinuity of the map $\mathbb Q\mapsto H(\mathbb Q|\P)$, will force any limit point of $\mathbb Q_n$ to be also be a maximizer. 

In order to prove the uniform estimate \eqref{entropy-uniform}, we can take a maximizing sequence $\mathbb Q_n$ so that
\begin{equation}\label{estimate-1}
 \E^{\mathbb Q_n} \bigg [\int_0^\infty \frac{\eps \e^{-\eps r}\, \d r}{|\omega(r)-\omega(0)|}\bigg] \geq H(\mathbb Q_n|\P). 
 \end{equation} 
Note that, by Lemma \ref{lemma-2-pf-thm1}, we also have, for some $C>1$, 
$$
C \E^{\mathbb Q_n} \bigg [\int_0^\infty \frac{\eps \e^{-\eps r}\, \d r}{|\omega(r)-\omega(0)|}\bigg] H(\mathbb Q_n |\P) \leq g_C(\eps) <\infty,
$$
which combined with \eqref{estimate-1} implies
$(C-1) H(\mathbb Q_n |\P) \leq g_C(\eps)$ proving the desired uniform bound \eqref{entropy-uniform}.
 \end{proof}

 
 The following lemma will finish the proof of Theorem \ref{thm1}.
\begin{lemma}\label{lemma-4-pf-thm1}
$\lim_{T\to\infty} \widehat\P_{\eps,T}=\widehat\P_\eps\in\mathfrak m_\eps$, i.e., the limiting  process $\widehat\P_\eps$ with stationary increments is a maximizer for 
the variational formula $g(\eps)$. 
\end{lemma}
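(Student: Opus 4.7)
The plan is to combine the strong large deviation principle of Lemma \ref{lemma-2-pf-thm1} with the convergence $\widehat\P_{\eps,T}\to\widehat\P_\eps$ recalled in Section \ref{sec-Review}, via a barycentric identification of $\widehat\P_\eps$ inside $\Mcal_{\mathrm{si}}$.

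First, I would argue that the laws $\widehat{\mathbb Q}_{\eps,T}=\widehat\P_{\eps,T}R_T^{-1}$ on $\Mcal_{\mathrm{si}}$ concentrate on $\mathfrak m_\eps$. Since the rate function $J_\eps$ of Lemma \ref{lemma-2-pf-thm1} has compact level sets and vanishes precisely on $\mathfrak m_\eps$, the LDP upper bound applied to any weakly closed set $F\subset\Mcal_{\mathrm{si}}$ disjoint from $\mathfrak m_\eps$ gives $\widehat{\mathbb Q}_{\eps,T}(F)\to 0$ exponentially, and coercivity of $J_\eps$ delivers exponential tightness. Along a subsequence $T_k\to\infty$, $\widehat{\mathbb Q}_{\eps,T_k}\to\Gamma$ weakly for some probability measure $\Gamma$ on $\Mcal_{\mathrm{si}}$ supported on $\mathfrak m_\eps$.

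Next I would show that the barycenter $\widetilde{\mathbb Q}_{\eps,T}:=\int\mathbb Q\,\widehat{\mathbb Q}_{\eps,T}(\d\mathbb Q)=\E^{\widehat\P_{\eps,T}}[R_T]$ converges weakly to $\widehat\P_\eps$. One half is automatic: for each bounded continuous cylinder function $\phi$ on $\mathcal Y$, the map $\mathbb Q\mapsto\int\phi\,\d\mathbb Q$ is bounded continuous on $\Mcal_{\mathrm{si}}$, so $\widetilde{\mathbb Q}_{\eps,T_k}\to\bar{\mathbb Q}:=\int\mathbb Q\,\Gamma(\d\mathbb Q)$ weakly. For the other half, by the definition of $R_T$ and the stationarity of the increments of $\widehat\P_\eps$,
$$
\widetilde{\mathbb Q}_{\eps,T}(A)=\frac{1}{T}\int_0^T\widehat\P_{\eps,T}\bigl(\tau_s(\omega_T-\omega_T(0))\in A\bigr)\,\d s
$$
is close to $\widehat\P_\eps(A)$ for $s$ in any bounded window (by total variation convergence on bounded intervals), and the boundary strips contribute negligibly in the average by the renewal construction of $\widehat\P_\eps$ in Section \ref{sec-Review}. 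Hence $\bar{\mathbb Q}=\widehat\P_\eps$, i.e., $\widehat\P_\eps=\int\mathbb Q\,\Gamma(\d\mathbb Q)$ is the barycenter of a probability measure on $\mathfrak m_\eps$.

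To conclude, the linearity of $H(\cdot|\P)$ from Lemma \ref{lemma-entropy}, together with Fubini for the non-negative integrand $F_\eps$, yields
$$
\int F_\eps\,\d\widehat\P_\eps-H(\widehat\P_\eps|\P)=\int\Bigl[\int F_\eps\,\d\mathbb Q-H(\mathbb Q|\P)\Bigr]\Gamma(\d\mathbb Q)=g(\eps),
$$
which places $\widehat\P_\eps$ in $\mathfrak m_\eps$. The main obstacle is the Coulomb singularity of $F_\eps$: the displayed identity has to be established first with the truncation $F_{\eps,\eta}$ built from $V_\eta$ in Lemma \ref{lemma-Coulomb}, and then $\eta\to 0$ is taken using the uniform bound $\sup\{H(\mathbb Q|\P):\mathbb Q\in\mathfrak m_\eps\}<\infty$---a quantitative refinement of the entropy estimate in the proof of Lemma \ref{lemma-3-pf-thm1}---which via Lemma \ref{lemma-Coulomb} controls $\int(F_\eps-F_{\eps,\eta})\,\d\mathbb Q$ uniformly on $\mathfrak m_\eps$.
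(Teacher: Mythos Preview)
Your proposal follows essentially the same route as the paper: both argue via the LDP of Lemma~\ref{lemma-2-pf-thm1} that $\widehat{\mathbb Q}_{\eps,T}=\widehat\P_{\eps,T}R_T^{-1}$ is tight with all limit points supported on $\mathfrak m_\eps$, and then use the stationarity of $\widehat\P_\eps$ (from \cite{MV18}) to place $\widehat\P_\eps$ in $\mathfrak m_\eps$. The paper's proof compresses the last implication into a single sentence, whereas you spell it out via the barycentric identification $\widehat\P_\eps=\int\mathbb Q\,\Gamma(\d\mathbb Q)$ and the affinity of $\mathbb Q\mapsto\int F_\eps\,\d\mathbb Q-H(\mathbb Q|\P)$.

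Two small comments on your write-up. First, your justification of $\widetilde{\mathbb Q}_{\eps,T}\to\widehat\P_\eps$ is imprecise: total variation convergence on \emph{fixed} finite intervals only controls the integrand for $s$ in a bounded window, but that window has vanishing weight in the $\frac1T\int_0^T$ average; what is really needed is that for $s$ in the bulk of $[0,T]$ the law of the increments under $\widehat\P_{\eps,T}$ is close to $\widehat\P_\eps$, and this is indeed delivered by the renewal structure of Section~\ref{sec-Review} (approximate translation invariance away from the boundary), which you correctly invoke. Second, the truncation detour for the Coulomb singularity is unnecessary: since $F_\eps\ge 0$, Tonelli gives $\int\bigl[\int F_\eps\,\d\mathbb Q\bigr]\Gamma(\d\mathbb Q)=\int F_\eps\,\d\widehat\P_\eps$ directly, and finiteness follows from $\Gamma(\mathfrak m_\eps)=1$ together with the uniform entropy bound on $\mathfrak m_\eps$.
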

\begin{proof}
From the large deviation upper bound from Lemma \ref{lemma-2-pf-thm1}  it follows that if $U(\mathfrak m_\eps)$ is any open neighborhood of the maximizing set $\mathfrak m_\eps$ in the weak topology, then   
$$
\begin{aligned}
&\limsup_{T\to\infty} \frac 1 T\log \widehat{\P}_{\eps,T}[R_T\in U(\mathfrak m_\eps)^c] \\
&=-g(\eps) + \sup_{\mathbb Q\notin U(\mathfrak m_\eps)}\bigg[\int F\d\mathbb Q- H(\mathbb Q|\P)\bigg] \\
&<0.
\end{aligned}
$$
We recall that the rate function $\mathbb Q\mapsto H(\mathbb Q| \P)$ has compact sub-level sets. Then it follows that the distributions $\{\widehat\P_{\eps,T} \, R_T^{-1}\}_T$ are tight and any limit point is concentrated in $\mathfrak m_\eps$. Finally,  as shown in \cite{MV18}, $\widehat \P_\eps=\lim_{T\to\infty}\widehat \P_{\eps,T}$ is stationary, implying that
$\widehat \P_\eps \in \mathfrak m_\eps$. 
\end{proof}

\subsection{Strong coupling limit: Proof of Theorem \ref{thm2}.}

Recall that $\widehat \P_\eps \in \mathfrak m_\eps$, and 
\begin{equation}\label{eq-DV-free-energy}
\lim_{\eps\to 0} g(\eps)=g_0= \sup_{\mu\in \Mcal_1(\R^3)} \bigg[\int\int \frac{\mu(\d x) \mu(\d y)}{|x-y|}- I(\mu)\bigg]>0.
\end{equation}
with $I(\mu)$ being the classical large deviation rate function, i.e., $I(\mu)=\big\|\nabla\sqrt{\frac{\d\mu}{\d x}}\big\|_2^2$ if $\mu$ has a Lebesgue density in $H^1(\R^3)$, else $I(\mu)=\infty$. 
We also note that the minimizer of the variational formula 
\begin{equation}\label{eq-contr-pr}
\inf_{\mathbb Q\in \Mcal_{\mathrm{si}}}\,\,\, H(\mathbb Q)= I(\mu).
\end{equation}
with the infimum above being taken over all $\mathbb Q$ with marginal $\mu$ must be the increment  of the ergodic Pekar process with generator $L_\psi=\frac 12 \Delta+\frac{\nabla\psi}{\psi}\cdot \nabla$, where $\mu(\d x)=\psi^2(x)\,\d x$. 
The following result, whose proof of is based on Theorem \ref{thm3} and Corollary \ref{cor-thm3}, will finish the proof of Theorem \ref{thm2}. 

\begin{theorem}\label{thm4}
For any $\eps\leq 1$ let $\mathbb Q_\eps\in \Mcal_{\mathrm{si}}(\mathcal X)$ be a maximizer of the variational formula $g(\eps)$, i.e., $\mathbb Q_\eps\in \mathfrak m_\eps$. Then 
$$
\sup_{\eps\leq 1} H(\mathbb Q_\eps |\P)\leq C
$$
 for some constant $C<\infty$. Moreover, if $\mathbb Q_n=\mathbb Q_{\eps_n}$ is any subsequence which converges weakly in $\Mcal_{\mathrm{si}}(\mathcal X)$
to some $\mathbb Q$, then $\mathbb Q$ is the distribution of the increments of a stationary process with marginal $\mu\in \Mcal_1(\R^3)$ such that 
\begin{equation}\label{eq0-thm4}
\lim_{n\to\infty} \eps_n\,\, \E^{\mathbb Q_n} \bigg[\int_0^\infty \frac{\e^{-\eps_n t}\,\, \d t}{|\omega(t)-\omega(0)|}\bigg]= \int\int_{\R^3\times\R^3} \frac{\mu(\d x) \,\mu(\d y)}{|x-y|} >0. 
\end{equation}
\end{theorem}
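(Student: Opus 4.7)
The plan is to combine the bootstrap argument of Lemma~\ref{lemma-3-pf-thm1} with the Abelian-averaging construction from Theorem~\ref{thm3}.

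For the uniform entropy bound, fix $C>1$ and let $g_C(\eps)=\sup_{\mathbb Q\in\Mcal_{\mathrm{si}}}\{C\,\E^{\mathbb Q}[F_\eps]-H(\mathbb Q|\P)\}$. Writing $F_\eps=F_{\eps,\eta}+(F_\eps-F_{\eps,\eta})$ and applying the entropy inequality together with the super-exponential estimate from the proof of Lemma~\ref{lemma-Coulomb} yields $\sup_{\eps\le 1}g_C(\eps)\le M<\infty$, upon sending $\eta\to 0$ and then $\beta\to\infty$. Since $g(\eps)\ge 0$ and $\mathbb Q_\eps$ maximises $g(\eps)$, one has $Cg(\eps)+(C-1)H(\mathbb Q_\eps|\P)\le g_C(\eps)\le M$, and thus $\sup_{\eps\le 1}H(\mathbb Q_\eps|\P)\le M/(C-1)$. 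Coercivity of $H(\cdot|\P)$ (Lemma~\ref{lemma-entropy}) then makes $\{\mathbb Q_\eps\}$ weakly precompact; extract $\mathbb Q_n=\mathbb Q_{\eps_n}\to\mathbb Q\in\Mcal_{\mathrm{si}}$ with $H(\mathbb Q|\P)<\infty$ by lower semicontinuity.

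Next, following the proof of Theorem~\ref{thm3}, I would introduce on $\mathcal X_0\otimes\R^3$ the probability measures
$$
\mathbb Q_n^{*}=\eps_n\int_0^\infty\e^{-\eps_n t}\,\tau_t\mathbb Q_n\,\d t,
$$
with $\tau_t(\xi,a)=(\tau_t\xi,a+\xi(t))$; each has $\mathcal X_0$-projection $\mathbb Q_n$ and satisfies $\|\mathbb Q_n^{*}-\tau_t\mathbb Q_n^{*}\|\to 0$. Extract a wea-guely convergent sub-subsequence $\mathbb Q_n^{*}\to\mathbb Q^{*}$, a $\tau_t$-invariant sub-probability measure. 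Since $V_\eta$ is bounded continuous with $V_\eta(a)\to 0$ as $|a|\to\infty$, wea-gue convergence yields
$$
\E^{\mathbb Q_n^{*}}[V_\eta(a)]=\eps_n\E^{\mathbb Q_n}\!\bigg[\int_0^\infty\e^{-\eps_n t}V_\eta\big(\omega(t)-\omega(0)\big)\d t\bigg]\longrightarrow\E^{\mathbb Q^{*}}[V_\eta(a)].
$$
Lemma~\ref{lemma-Coulomb} combined with the uniform entropy bound controls the singular-tail error uniformly in $n$, and monotone convergence $V_\eta\uparrow 1/|a|$ then gives
$$
\lim_n\eps_n\E^{\mathbb Q_n}\!\bigg[\int_0^\infty\frac{\e^{-\eps_n t}\,\d t}{|\omega(t)-\omega(0)|}\bigg]=\E^{\mathbb Q^{*}}[1/|a|].
$$
Since $\E^{\mathbb Q_n}[F_{\eps_n}]=g(\eps_n)+H(\mathbb Q_n|\P)\ge g_0-o(1)$, this forces $\E^{\mathbb Q^{*}}[1/|a|]\ge g_0>0$ and hence $\mathbb Q^{*}\not\equiv 0$. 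Finally, writing $X(t):=a+\xi(t)$, the $\tau_t$-invariance of $\mathbb Q^{*}/\|\mathbb Q^{*}\|$ exhibits it as the law of a stationary $\R^3$-valued process with some marginal $\mu$, whose increments $X(t)-X(0)=\xi(t)$ are distributed as the $\mathcal X_0$-projection of $\mathbb Q^{*}/\|\mathbb Q^{*}\|$; provided $\|\mathbb Q^{*}\|=1$ this projection coincides with $\mathbb Q$ and realises $\mathbb Q$ as the increments of a stationary process with marginal $\mu$. An Abelian/mixing argument at the stationary level then identifies the $\R^3$-marginal of $\mathbb Q^{*}$ with $\mu*(-\mu)$, yielding the claimed identity $\E^{\mathbb Q^{*}}[1/|a|]=\int\int\mu(\d x)\mu(\d y)/|x-y|$.

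The main obstacle is this final identification: ensuring $\|\mathbb Q^{*}\|=1$ (equivalently, tightness of the random shifts $\omega(U_n)-\omega(0)$, with $U_n\sim\mathrm{Exp}(\eps_n)$, under $\mathbb Q_n$) so that $\mu$ is a bona fide probability and $\mathbb Q$ is fully recovered, and correctly identifying the $\R^3$-marginal as $\mu*(-\mu)$. Neither follows from weak convergence of $\mathbb Q_n$ and the entropy bound alone; one should apply Corollary~\ref{cor-thm3} to the ergodic components of $\mathbb Q$ to rule out the ``escape'' alternative~(a) using the positivity $g_0>0$, and then exploit the explicit form of the dynamics $\tau_t(\xi,a)=(\tau_t\xi,a+\xi(t))$ on $\mathbb Q^{*}$ to read off the marginal of the stationary process.
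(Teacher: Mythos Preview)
Your proposal follows the paper's strategy almost step for step: the bootstrap entropy bound (Lemma~\ref{lemma-3-pf-thm1}), the Abel-averaged measures $\mathbb Q_n^{*}=\eps_n\int_0^\infty\e^{-\eps_n t}\tau_t\mathbb Q_n\,\d t$ on $\mathcal X_0\otimes\R^3$, the extraction of a wea-gue limit, and the appeal to Corollary~\ref{cor-thm3} on ergodic components. The difference lies in how you close the final identification.

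You try to read off the identity $\lim_n\E^{\mathbb Q_n}[F_{\eps_n}]=\iint|x-y|^{-1}\mu(\d x)\mu(\d y)$ by asserting that the $\R^3$-marginal of $\mathbb Q^{*}$ is $\mu*\check\mu$. This is morally right but needs a genuine ergodic argument, not a mixing one. The paper proceeds differently: it decomposes the stationary wea-gue limit into ergodic components $R$ with marginals $\mu_R$, applies Birkhoff (which needs only ergodicity) to get $\eps\int_0^\infty\e^{-\eps t}V(\omega(t)-\omega(0))\,\d t\to\int V(x-\omega(0))\,\mu_R(\d x)$ $R$-a.s., and then averages over $\omega(0)\sim\mu_R$ to obtain $\iint V(x-y)\,\mu_R(\d x)\mu_R(\d y)$ for each piece. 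This yields \eqref{eq3-thm4} directly, without identifying any marginal as a convolution.

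The step you are missing explicitly is the variational comparison. The paper combines \eqref{eq3-thm4} with linearity of entropy, $H(\mathbb Q|\P)=\int H(R|\P)\,\Gamma(\d R)$, and lower semicontinuity to obtain
\[
\int\Gamma(\d R)\,\bigg[\iint\frac{\mu_R(\d x)\mu_R(\d y)}{|x-y|}-H(R|\P)\bigg]\ \ge\ \lim_{\eps\to 0}g(\eps)=g_0,
\]
and then invokes the contraction principle \eqref{eq-contr-pr} together with \eqref{eq-DV-free-energy} to force $\Gamma$-a.e.\ $R$ to be a stationary Pekar process; this is what simultaneously establishes \eqref{eq0-thm4} and that $\mathbb Q$ is the increment law of a stationary process with marginal $\mu\in\mathfrak m$. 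Your allusion to ``positivity $g_0>0$'' is pointing here, but the role of \eqref{eq-contr-pr} should be made explicit. Also note that you do not need $\|\mathbb Q^{*}\|=1$: as in Theorem~\ref{thm3}, once the mass $m>0$, the normalization $\mathbb Q^{*}/m$ is already a stationary process whose increments coincide with $\mathbb Q$ by ergodicity of the latter's components.
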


\begin{proof}
We can repeat the argument in the proof of Lemma \ref{lemma-3-pf-thm1} to show that if $\mathbb Q_\eps$ is a maximizer to $g(\eps)$, then 
$$
\sup_{\eps<1}H(\mathbb Q_\eps |\P) \leq C <\infty,
$$
implying that $(\mathbb Q_\eps)_\eps$ is tight. Let $\mathbb Q_n=\mathbb Q_{\eps_n}$ be a subsequence that converges to some process with stationary increments. 
Again, for each $n$, we can make $\mathbb Q_n[\omega(0)=0]=1$ and view $\mathbb Q_n$ as a sequence of measures in $\mathcal X_0\otimes \R^3$. We consider the average
$$
\mathbb Q_{n,\eps_n}= \eps_n \int_0^\infty \,\d t\,\, \e^{-\eps_n t}\,\, \tau_t \,\mathbb Q_n.
$$
Since $\mathbb Q_{n,\eps_n}$ has the same distribution of increments as $\mathbb Q_n$, and the marginals of $\mathbb Q_n$ in $\mathcal X_0$ are tight, $\mathbb Q_{n,\eps_n}$ has a subsequence,
still denoted by $\mathbb Q_n$, which converges  wea-guely in the space of measures on $\mathcal X_0\otimes\R^3$ to a stationary process $\mathbb Q$, with marginal $\mu\in \Mcal_1(\R^3)$. By lower-semicontinuity, we have 
\begin{equation}\label{eq1-thm4}
H(\mathbb Q| \P) \leq \liminf_{n\to\infty} H(\mathbb Q_n | \P).
\end{equation}
Next, the criterion that characterizes wea-gue convergence, combined with Lemma \ref{lemma-Coulomb} then also imply that, for any $a\in \R^d$, 
$$
\lim_{n\to\infty} \eps_n\,\,\E^{\mathbb Q_n} \bigg[\int_0^\infty \frac{\e^{-\eps_n t} \,\, \d t}{|\omega(t)-\omega(0)|} \bigg] = \int_{\R^3} V(x-a) \,\mu(\d x).
$$
Moreover, if the stationary limit $\mathbb Q$ is ergodic, we also have
$$
\lim_{n\to\infty} \eps_n\,\,\bigg[\int_0^\infty \frac{\e^{-\eps_n t} \,\, \d t}{|\omega(t)-\omega(0)|} \bigg] = \int_{\R^3} V(x-a) \,\mu(\d x),
$$
in probability, and consequently, 
\begin{equation}\label{eq2-thm4}
\lim_{n\to\infty} \eps_n\,\,\E^{\mathbb Q_n}\bigg[\int_0^\infty \frac{\e^{-\eps_n t} \,\, \d t}{|\omega(t)-\omega(0)|} \bigg] = \int\int_{\R^3\times\R^3}  \frac{\mu(\d x)\mu(\d y)}{|x-y|}.
\end{equation}
If $\mathbb Q\in \Mcal_{\mathrm{s}}(\mathcal X_0\otimes \R^d)$ is not ergodic, it is a mixture 
$$
\mathbb Q=\int R\,\, \Gamma(\d R)
$$
of ergodic measures $R\in \Mcal_{\mathrm{s,e}}(\mathcal X_0\otimes \R^3)$, with $\Gamma$ being a suitable probability measure on the space of ergodic measures. If $\mu_R \in \Mcal_1(\R^3)$ denotes the marginal of $R$,
then by \eqref{eq2-thm4},
\begin{equation}\label{eq3-thm4}
\lim_{n\to\infty} \eps_n\,\,\E^{\mathbb Q_n}\bigg[\int_0^\infty \frac{\e^{-\eps_n t} \,\, \d t}{|\omega(t)-\omega(0)|} \bigg] = \int\Gamma(\d R)\,\, \int\int_{\R^3\times\R^3}  \frac{\mu(\d x)\mu(\d y)}{|x-y|}
\end{equation}
On the other hand, by linearity of $H(\cdot|\P)$ and \eqref{eq1-thm4},
\begin{equation}\label{eq4-thm4}
H(\mathbb Q|\P) = H\bigg(\int R \,\, \Gamma(\d R) \big| \P\bigg) = \int H(R|\P) \,\,\Gamma(\d R) \leq \liminf_{n\to\infty} H(\mathbb Q_n | \P).
\end{equation}
Then combining the last two displays we have
$$
\begin{aligned}
\int \Gamma(\d R)\,\, \bigg[\int\int_{\R^3\times\R^3} V(x-y) \,\mu_R(\d x)\mu_R(\d y) - H(R|\P)\bigg] &\geq \lim_{\eps\to 0} \sup_{\mathbb Q\in \Mcal_{\mathrm{si}}} \bigg[\E^{\mathbb Q}\bigg(\int_0^\infty \frac{\eps\e^{-\eps t} \,\, \d t}{|\omega(t)-\omega(0)|}\bigg)- H(\mathbb Q|\P)\bigg] \\
&=\int\int_{\R^3\times\R^3}  \frac{\mu(\d x)\mu(\d y)}{|x-y|} - I(\mu)
 \end{aligned}
$$
by \eqref{eq-DV-free-energy}. Then \eqref{eq-contr-pr} implies \eqref{eq0-thm4}, which combined with Corollary \ref{cor-thm3} implies that $\mathbb Q$ is the distribution of the increments of a stationary 
process with marginal $\mu$, and $\mu(\d x)= \psi^2(x) \d x$ with $\psi\in\mathfrak m$. Then $\mathbb Q=\widehat{\mathbb Q}^{\ssup\psi}$ and Theorem \ref{thm4} is proved.
\end{proof}



\noindent{\bf{Acknowledgement.}} Most of this work was carried out during the first author's long term stay at the Courant Institute, and its hospitality is gratefully acknowledged.

\end{document}